\documentclass[11pt,oneside,reqno]{amsart}
 \usepackage{amsmath}
\usepackage{amssymb}
\usepackage{color}
\usepackage{cite}
\usepackage{enumerate}
\usepackage{fancyhdr}
\pagestyle{plain}
\usepackage{graphicx}
\usepackage{bm}
\usepackage{soul,xcolor}

\usepackage[margin=3.5cm]{geometry}

\usepackage{subfigure}
\usepackage{algorithm}
\usepackage{algorithmic}
\usepackage{float}
\usepackage{lipsum}
\makeatletter
\newenvironment{breakablealgorithm}
{% \begin{breakablealgorithm}
	\begin{center}
		\refstepcounter{algorithm}% New algorithm
		\hrule height.8pt depth0pt \kern2pt% \@fs@pre for \@fs@ruled
		\renewcommand{\caption}[2][\relax]{% Make a new \caption
			{\raggedright\textbf{\ALG@name~\thealgorithm} ##2\par}%
			\ifx\relax##1\relax % #1 is \relax
			\addcontentsline{loa}{algorithm}{\protect\numberline{\thealgorithm}##2}%
			\else % #1 is not \relax
			\addcontentsline{loa}{algorithm}{\protect\numberline{\thealgorithm}##1}%
			\fi
			\kern2pt\hrule\kern2pt
		}
	}{% \end{breakablealgorithm}
		\kern2pt\hrule\relax% \@fs@post for \@fs@ruled
	\end{center}
}
\makeatother
%% Equations numbered by section (optional):

\numberwithin{equation}{section}

\usepackage{hyperref}\usepackage{hyperref}
\def\span{\textrm{ span }}

\def\C{ {\mathbb C} }
\def\R{ {\mathbb R} }

\def\Z{ {\mathbb Z} }

\def\T{ {\mathbb T} }

\DeclareMathOperator{\Tr}{Tr}

\newtheorem{theorem}{Theorem}[section]
\newtheorem{lemma}[theorem]{Lemma}
\newtheorem{proposition}[theorem]{Proposition}

\newtheorem{corollary}[theorem]{Corollary}

\newtheorem{example}[theorem]{Example}

\def\C{ {\mathbb C} }

\def\R{ {\mathbb R} }
\def\Z{ {\mathbb Z} }

\def\span{{\rm{span}}}
\numberwithin{equation}{section}

\title{Conjugate phase retrieval in a complex shift-invariant space }
\author{ Yang Chen and  Yanan Wang}
\address{Chen and Wang: Key Laboratory of Computing and Stochastic Mathematics (Ministry of Education), School of Mathematics and Statistics,
 Hunan Normal University, Changsha, Hunan 410081, P. R. China}
 \email{Chen: ychenmath@hunnu.edu.cn, Wang: ynwang163@163.com}

 \thanks{This project is  supported by  National Natural
Science Foundation of China (11901192, 12171490).}% and  Scientific Research Fund of Hunan Provincial Education Department(18C0059). }
\date{}

\begin{document}
	\date{}
	\pagestyle{plain}
	
	\date{}
	\maketitle
\begin{abstract}
The  conjugate phase retrieval problem concerns the determination of a complex-valued function, up to a unimodular constant and conjugation, from its magnitude observations. It can also be considered as a conjugate phaseless sampling and reconstruction problem in an infinite dimensional space. %Let $\phi$ be a compactly supported real-valued function and $\mathcal S(\phi)$ be the complex shift-invariant space generated by $\phi$.
 In this paper, we first characterize the conjugate phase retrieval  from the point evaluations in  a  complex shift-invariant space $\mathcal S(\phi)$, where the generator $\phi$ is a compactly supported real-valued function.
If the generator $\phi$ has some spanning property, we also show that a conjugate phase retrievable function in $\mathcal S(\phi)$  can be reconstructed from its phaseless samples taken on a discrete set with finite sampling density. %$f\in \mathcal S(\phi)$
With additional phaseless measurements on the function derivative, for  the B-spline generator $B_N$ of order $N\ge 3$ which does not have the spanning property,  we find  sets $\Gamma$ and $\Gamma'\subset (0,1)$ of cardinalities $2N-1$ and $2N-5$  respectively,
such that a conjugate phase retrievable function $f$ in the spline space $\mathcal B_N$
can be determined from its phaseless Hermite samples $|f(\gamma)|, \gamma\in\Gamma+\Z$,  and $|f'(\gamma)|, \gamma'\in\Gamma'+\Z$. %, and the sampling density for the conjugate phase retrieval in $\mathcal B_N$ is $4N-6$.
An algorithm is  proposed for the conjugate phase retrieval of piecewise polynomials from the Hermite samples.  Our results  provide  illustrative examples of real conjugate phase retrievable frames for the complex finite dimensional space $\C^N$.
 \end{abstract}

 {\bf Keywords}  conjugate phase retrieval; complex shift-invariant space;  B-spline; Hermite sampling.

\section{Introduction}
 In this paper, we consider the problem of reconstructing a function $f$ in a complex shift-invariant space $\mathcal S(\phi)$, up to a unimodular constant and conjugation, from the phaseless samples $|f(x)|, x\in\Omega$, and $|f'(x)|, x\in\Omega'$,  where the generator $\phi$ is a real-valued continuous function with  compact support, the complex shift-invariant space %$\mathcal S(\phi)$ generated by $\phi$ is given by
  \begin{equation}\label{sis.def}
\mathcal S(\phi):=\left\{\sum_{k\in\Z}c_k\phi(x-k), c_k\in\C {\rm \ for\ all\ } k\in \Z\right\},
\end{equation}  and  the sets $\Omega,\Omega'$ are either the whole real line $\R$ or its discrete subsets. The above problem is an  infinite-dimensional conjugate phase retrieval problem, which  determines a  function in a complex conjugate invariant linear space $\mathcal C$ (i.e., $\overline{f}\in \mathcal C$ if $f\in \mathcal C$), up to a unimodular constant and conjugation, from its phaseless measurements. It is a weak form of phase retrieval in the complex setting and has been discussed for the complex vectors from real frames and the continuous-time signals in a complex Paley-Wiener space or more general complex conjugate invariant linear spaces from the linear observations\cite{CCS22,Lai20,Lai21,McDonald04}.

The classical phase retrieval problem arises in optics\cite{F78,F82,HLO80,KS92,M90}. It aims to recover a function of interest from the magnitudes of its Fourier measurements. In the setting of frame theory, the authors of \cite{BCD06} introduced a mathematical formulation for the phase retrieval problem in 2006, which concerns the reconstruction of a vector in a real/complex Hilbert space, up to a global phase, from the magnitudes of its frame coefficients. Since then there are a lot of research devoted to this area. And it has been extended to the infinite-dimensional setting and generalized in other contexts. We refer to \cite{ADGT17,CCD16,CCPW16,CCSW20,CJS19,CS21,Edidin17,GSWX18,G20,HES16,IVW16,PYB14,QBP16,R21,T11,WX14,WX19,edidin22,JR22,ls20,SMS16,CS22,ADGY16,radu2016,RGrohs17,ABFM14,BBCE09,Bandeira14,candes2015,CSV12,cheng2020,DB22,MW15,xu}  for detailed discussions and recent developments on various aspects of this topic.

  Recently a new paradigm on phase retrieval of real vector-valued functions is proposed,
which is  to recover the vector-valued functions in a real linear space, up to an orthogonal transform, from the magnitudes of their linear functionals, and used in the velocity determination of a fleet of autonomous mobile robots and the unique configuration problem in Euclidean distance geometry \cite{CCS22,jackson2005,TBHTP19}. It is a generalized framework of  phase retrieval in the real setting and conjugate phase retrieval in the complex setting. Phase retrieval in the real setting can be seen as   phase retrieval of real scalar-valued functions.  By an isomorphism from $\C$ to $\R^2$, the conjugate phase retrieval problem of a complex-valued function  $f$ is essentially the phase retrieval problem of a $\R^2$-valued function  $(\Re f, \Im f)$.  The authors of \cite{CCS22} confirmed that conjugate phase retrieval in a complex conjugate invariant space $\mathcal C$ is closely related to  phase retrieval  in its real subspace $\Re \mathcal C=\{\Re f:=\frac{f+\bar f}{2}, f\in\mathcal C\}$. Specially they showed that a real frame  can do conjugate phase retrieval in $\C^2$ if and only if it allows phase retrieval in $\R^2$, and  found all the conjugate phase retrievable function in the complex shift-invariant space $V(h)$ generated by the hat function  $h(t)=\max(1-|t-1|, 0)$.

Shift-invariant space is widely used  in approximation theory,  wavelet analysis and signal processing, see\cite{AST05,AG01,deBoor92,mallatbook} and references therein. The representative examples are the Paley-Wiener space of bandlimited functions which  is a shift-invariant space generated by the function  ${\rm sinc}(t)=\frac{\sin \pi t}{\pi t}$, and the spline space %$\mathcal B_N$ generated by the B-spline $B_N$ of order $N$, i.e.,
\begin{eqnarray}\label{spline.sis.def}
\mathcal B_N\hskip-0.05in&:=\hskip-0.05in&\hskip-0.05in\left\{\sum_{k\in\Z}c_kB_N(x-k), c_k\in\C\right\}\nonumber\\
\hskip-0.05in&=\hskip-0.05in&\hskip-0.05in\left\{f\in \C^{N-2}, f|_{[j,j+1)}\ {\rm is\  a\ polynomial \ of \ degree}\ N-1 {\rm \ for\ all}\ j\in\Z \right\},
 \end{eqnarray} generated by the B-spline $B_N$ of order $N$, where $f|_{I}$ is the restriction of $f$ on the set $I$,  the B-spline $B_1$ is the characteristic function  on $[0,1)$ and the B-splines $B_N, N\ge 2$,  is defined by  $B_N=\int_0^1B_{N-1}(\cdot-t)dt$. Phase retrieval in a real  shift-invariant space $\mathcal S_\R(\phi)=\{\sum_{k\in\Z}a_k\phi(x-k), a_k\in\R\}$ has received a lot of attention \cite{CCSW20,CJS19,G20,R21,sun21}.
 In this paper, we consider the conjugate phase retrieval  in $\mathcal S(\phi)$ with a compactly supported real-valued generator $\phi$.
Following the methodology of local phase retrieval and sign propagation used for the phase retrieval in the real subspace $\mathcal S_\R(\phi)$ \cite{CCSW20},  in  Section \ref{cpr.sec} we characterize the conjugate phase retrievable functions from their point evaluations in $\mathcal S(\phi)$  via  local conjugate phase retrieval on the intervals $I_j=(j,j+1), j\in\Z,$ and propagating phase and conjugation among neighbouring intervals, see Theorem \ref{global.thm}.

(Conjugate) phase retrieval in a shift-invariant space  can be seen as a (conjugate) phaseless (Hermite) sampling and reconstruction problem \cite{CCSW20,CJS19,CS22,G20,ls20,R21,sun21}.
Sampling in a shift-invariant space is well studied as it is a realistic and suitable model for many applications \cite{AG01,AST05,GRS18,Sun10}.
   In Section \ref{sampling.sec}, we show that if the compactly supported real-valued generator $\phi$ with support length $L$ has some spanning property,
   the complex shift-invariant space $\mathcal S(\phi)$ is locally conjugate phase retrievable on the intervals $I_j=(j,j+1), j\in\Z$, and  all the conjugate phase retrievable functions in $\mathcal S(\phi)$ can be determined, up to a unimodular constant and conjugation, from its phaseless samples $|f(x)|,x\in\Gamma+\Z$, taken on the set $\Gamma+\Z$ with  sampling density $\frac{L(L+1)}{2}$, see Theorem \ref{sufficiency.thm}. The above spanning condition  and sampling density $\frac{L(L+1)}{2}$ is also necessary for the local conjugate phase retrievability of $\mathcal S(\phi)$ on the intervals $I_j, j\in\Z$, when the generator $\phi$ has support length $L=3$, see Corollary \ref{l3.cor}, Example \ref{phi1.ex} and  an counterexample of $\mathcal B_3$ in Example \ref{b3.ex}, and also an equivalent formulation for a conjugate phase retrievable frame of real vectors for $\C^3$ in \cite{Lai20}.

Hermite sampling is considered as interpolating by the samples from the function  and its first derivative, which is widely used in the reconstruction of bandlimited functions and Hermite splines\cite{FAUU20,JF56,M89}. The additional derivative measurements are introduced in the (conjugate) phase retrieval of analytic functions\cite{JR22,Lai21,McDonald04}. In Section \ref{hermite.sec}, we consider the conjugate phaseless sampling and reconstruction from Hermite samples in  $\mathcal S(\phi)$, see Theorem \ref{hermite.thm}. With the help of derivative samples, the conjugate phase retrievable function in $\mathcal B_3$ can be determined, up to a unimodular constant and conjugation, from its phaseless Hermite samples on some discrete sets.

  For any function $f$ in the spline space $\mathcal B_N$ as in \eqref{spline.sis.def} with $N\ge 3$, the nonzero restrictions $f|_{I_j}$ on the intervals $I_j=(j,j+1),j\in\Z,$ are polynomials of order $N-1$ and they may not be conjugate phase retrieval from its magnitudes $|f(x)|, x\in I_j$.
In Theorem \ref{hermite.spline.thm} of Section \ref{cps.b}, we show that $\mathcal B_N$ is local conjugate phase retrieval on the intervals $I_j,j\in\Z,$ from the  Hermite measurements, and a conjugate phase retrievable function $f\in \mathcal B_N$  can be determined, up to a unimodular constant and conjugation, from its phaseless Hermite samples $|f(\gamma)|, \gamma\in\Gamma+\Z$, and $|f'(\gamma')|, \gamma'\in\Gamma'+\Z$, taken on the discrete sets $\Gamma+\Z$ and $\Gamma'+\Z$ with sampling density $2N-1$ and $2N-5$ respectively. Based on the constructive proof, we propose an algorithm to implement the  conjugate phase retrieval of piecewise polynomials from the Hermite samples. Our results also provide illustrative examples for frames of $4N-6$ real vectors  that allows conjugate phase retrieval in $\C^N$ \cite{Lai20}.

\section{Global and local conjugate phase retrieval in a complex shift-invariant space}\label{cpr.sec}
For a compactly supported real-valued generator $\phi$ of the complex shift-invariant space $\mathcal S(\phi)$, let   \begin{equation*}\label{supportlength.def}
 L:=\min_{L_1, L_2\in \Z} \{L_2-L_1, \phi \ {\rm vanishes\ outside}  \ [L_1, L_2]\},\end{equation*}
 be its support length.  For the representative generator $B_N$, its support length is the same as the order $N\ge 1$.
 Without loss of generality,
\begin{equation}\label{supportlength.def2}
\phi(t)=0\ {\rm  for\ all} \ t\not\in [0, L],\end{equation}
otherwise replacing $\phi$ by $\phi(\cdot-L_0)$ for some $L_0\in \Z$. For an open set $A\subset \R$,
   a function $f\in \mathcal S(\phi)$ is  said to be {\it locally conjugate phase retrievable} on $A$ if for any $g\in \mathcal S(\phi)$ satisfying $|f(x)|=|g(x)|, x\in A$, there exists a unimodular constant $z\in \T$ such that $g=zf$ or $g=z\overline{f}$ on $A$, where $\mathbb T=\{z\in\C, |z|=1\}$,  and $\mathcal S(\phi)$ is said to be {\it locally  conjugate phase retrievable} on $A$ if all functions in $\mathcal S(\phi)$ are locally conjugate phase retrievable on $A$. If  a function $f\in \mathcal S(\phi)$ is locally conjugate phase retrievable on the whole line $\R$, %$A=\R$,
   it is  said to be {\it globally conjugate phase retrievable} or {\it conjugate phase retrievable} for short.  Note that not all functions in $\mathcal S(\phi)$ are conjugate phase retrievable. For instance the functions   $z_1\phi(t)\pm z_2\phi(t-L)\in \mathcal S(\phi), z_1,z_2\in\T$,   have same magnitudes $|\phi(t)|+|\phi(t-L)|$, but they are not the same even up to a unimodular constant and conjugation.  In this section, we characterize all the conjugate phase retrievable  functions in $\mathcal S(\phi)$,
 in terms of local conjugate phase retrieval on  the intervals $I_j=(j,j+1), j\in\Z$, and propagating phase and conjugation  among neighbouring intervals, see  Theorem  \ref{global.thm}.

 Let us start from the simplest case that the generator has  support length $L=1$, a function $f\in \mathcal S(\phi)$ is conjugate phase retrievable if and only if there exists some integer $k_0\in\Z$ such that  \begin{equation}
 f(x)=c_{k_0}\phi(x-k_0) \  {\rm for\ some\ } c_{k_0}\in\C.
 \end{equation}
 For the case that the generator $\phi$ has support length $L=2$ and $\mathcal S(\phi)$ is locally conjugate phase retrievable on the intervals $I_j=(j,j+1), j\in\Z$,
 following a similar argument for the generator $h(t)=\max(1-|1-t|,0)$ in \cite{CCS22}, we have that a nonzero function $f(x)=\sum_{k\in\Z}c_k\phi(x-k)\in \mathcal S(\phi)$ is conjugate phase retrievable if and only if
 \begin{equation}  \label{complex.sis.pr.eq2}
c_k\ne 0 \  \ {\rm for \ all} \ K_-(f)-1< k<K_+(f)+1\end{equation}
and  there exists at most one $k_0\in (K_-(f)-1, K_+(f))$ such that
\begin{equation}\label{complex.sis.pr.eq3}
 \Im \big(c_{k_0}\overline{c_{k_0+1}}\big)\ne 0,
 \end{equation} where \begin{equation}\label{kpm.def}K_-(f):=\inf\{k, \ c_k\neq 0\} {\ \rm and }\  K_+(f):=\sup\{k, \  c_k\neq 0\}.\end{equation}
Therefore a nonzero function $f\in \mathcal S(\phi)$ is conjugate phase retrievable if and only if it satisfies \eqref{complex.sis.pr.eq2} and has the following decomposition
\begin{equation}\label{kfempty.decom}
f(x)=\xi_1\sum_{k\le k_0 } a_k\phi(x-k)+\xi_2\sum_{k\ge k_0+1} a_k\phi(x-k),
\end{equation} where $\xi_1,\xi_2\in \mathbb T$, $a_k=\overline{\xi_1}c_k\in\R, k\le k_0$, and $a_k=\overline{\xi_2}c_k\in\R, k\ge k_0+1$. For the case that  $f$ is real-valued up to a unimodular constant, the decomposition \eqref{kfempty.decom} holds for $\xi_1=\xi_2$, and hence $f$ is conjugate phase retrievable in $\mathcal S(\phi)$ if and only if \eqref{complex.sis.pr.eq2} holds  if and only if $\overline{\xi_1} f$ is phase retrievable in $\mathcal S_\R(\phi)$.

Now we  consider the case that the generator $\phi$ has support length
\begin{equation}\label{L.assump}
L\ge 3.\end{equation}
For any nonzero function $f\in \mathcal S(\phi)$, we can find  $\xi_1,\xi_2\in\mathbb T$ such that \begin{eqnarray}\label{f.decomp}
f(x)\hskip-0.1in&=\hskip-0.05in&\hskip-0.05in \xi_1\sum_{k=K_-(f)}^{ \kappa_-(f)}a_k\phi(x-k)+\sum_{k\in \mathcal K(f)}c_k\phi(x-k)+\xi_2\sum_{k=\kappa_+(f)}^{K_+(f)}a_k\phi(x-k), \nonumber\\
\hskip-0.05in&=:\hskip-0.05in&\hskip-0.05in \xi_1f_{\rm L}(x)+f_{\rm M}(x)+\xi_2f_{\rm R}(x),
\end{eqnarray}
where $K_\pm(f)$ is as in \eqref{kpm.def}, the function $f_{\rm L}(x)=\displaystyle{\sum_{k=K_-(f)}^{ \kappa_-(f)}}a_k\phi(x-k)$ is real-valued with \begin{equation}\label{coeff.eq.1}
 \kappa_-(f)\hskip-0.03in :=\hskip-0.03in\sup\{k, \Im (\overline{\xi_1}c_{l})=0{\ \rm for\ all\ }l\le k \} {\rm \ and\ } a_k=\overline{\xi_1}c_k\in \R {\rm\ for\ all\ }  k< \kappa_-(f)+1,\end{equation}  the function $f_{\rm R}(x)=\displaystyle{\sum_{k=\kappa_+(f)}^{K_+(f)}}a_k\phi(x-k)$ is real-valued with \begin{equation}\label{coeff.eq.inf}\kappa_+(f)=\inf\{k>\kappa_-, \Im (\overline{\xi_2}c_{l})= 0 {\ \rm for\ all\ }l\ge k\}
   \end{equation}
    and $\R\ni a_k=\left\{\begin{array}{cc}\overline{\xi_2}c_k & {\rm\ if\ }\kappa_+<K_+(f)+1\\
    0&{\rm else}, \end{array}
\right.$  and the function $f_{\rm M}(x)=\displaystyle{\sum_{k\in \mathcal K(f)}}c_k\phi(x-k)$ with
 \begin{equation}\label{kf.def}
{\mathcal K}(f):=\left\{\begin{array}{cc}(\kappa_-(f), \kappa_+(f))\cap \Z & {\rm \ if \ } \kappa_-+2\le\kappa_+<K_+(f)+1\\
\emptyset&{\rm else}.\end{array}
\right.
\end{equation}
Here we set $\inf \emptyset=+\infty$.
Using the above decomposition, we characterize the conjugate phase retrievable functions in $\mathcal S(\phi)$.

\begin{theorem}\label{global.thm}
{\rm  Let $\phi$ be a real-valued continuous function satisfying \eqref{supportlength.def2} and \eqref{L.assump}, and $\mathcal S(\phi)$ be locally conjugate phase retrievable on the intervals $I_j=(j,j+1), j\in\Z$.  Then a nonzero  function  $f(x)=\sum_{k\in\Z}c_k\phi(x-k)\in \mathcal S(\phi)$ with decomposition \eqref{f.decomp}-\eqref{kf.def} is  conjugate phase retrievable if and only if
\begin{itemize}
\item[i)] \begin{equation}\label{f.real.cond}
\sum_{l=0}^{L-2}|c_{k+l}|\ne0{\ \rm for \ all\ }k\in (K_{-}(f)-L+1,K_{+}(f)+1)
\end{equation} and
\item[ii)]
\begin{equation}\label{global.assump}
\sum_{k-L+2\le n_1<n_2\le k }|\Im(c_{n_1}\overline{c_{n_2}})|\ne 0 {\ \rm\ for\ all\ } \kappa_-+1\leq k\leq \kappa_++L-3,
\end{equation} provided that $\mathcal K(f)\ne \emptyset.$
 \end{itemize}
 }
\end{theorem}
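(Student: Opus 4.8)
The plan is to reduce everything to a propagation argument along the chain of intervals $\dots,I_{j-1},I_j,I_{j+1},\dots$, exactly in the spirit of the real shift-invariant case in \cite{CCSW20}, but carrying one extra binary datum per interval that records ``conjugated or not''. First I would pass from functions to coefficient tuples: using local linear independence of the shifts of $\phi$ (which makes local conjugate phase retrieval on $I_j$ a statement about the $L$-tuple $(c_{j-L+1},\dots,c_j)$), any $g=\sum_k d_k\phi(\cdot-k)$ with $|g|=|f|$ satisfies, for every $j$, either $d_k=z_jc_k$ for all $j-L+1\le k\le j$ or $d_k=z_j\overline{c_k}$ for all such $k$, with $z_j\in\T$. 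From $|g|=|f|$ the set $\{k:d_k\ne0\}$ must equal $\{k:c_k\ne0\}$, and condition (i) is precisely what forces this common index set to be a single block $\{K_-(f),\dots,K_+(f)\}$ with no internal gap of $L-1$ consecutive zeros; in particular every overlap window $\{c_{j-L+2},\dots,c_j\}$ of two consecutive active intervals contains a nonzero entry.

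For sufficiency (the substantive direction) assume (i), and (ii) when $\mathcal K(f)\neq\emptyset$, and let $g$ be as above. The engine is the comparison of $I_j$ and $I_{j+1}$ on their shared window: if the two local conjugation choices agree, then $z_jc_n=z_{j+1}c_n$ there, whence $z_j=z_{j+1}$ by (i); if they disagree, then $c_n=(z_{j+1}/z_j)\overline{c_n}$ for every $n$ in the window, i.e. all window coefficients are real up to one common unimodular phase, which is the negation of (ii) at $k=j$. Hence on the range $\kappa_-(f)+1\le j\le\kappa_+(f)+L-3$ no conjugation flip can occur, so the binary datum is constant on $I_{\kappa_-(f)+1},\dots,I_{\kappa_+(f)+L-2}$; normalizing $g$ (replacing it by $\bar g$ if necessary) we take it to be ``not conjugated'', and then (i) forces $z_j\equiv z$ there, i.e. $g=zf$. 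To reach the two end blocks $I_{K_-(f)},\dots,I_{\kappa_-(f)}$ and $I_{\kappa_+(f)+L-1},\dots,I_{K_+(f)+L-1}$, observe that by \eqref{f.decomp} on these intervals $f$ is $\xi_1$ (resp. $\xi_2$) times a real function, so $\overline{f|_{I_j}}$ is only a unimodular multiple of $f|_{I_j}$; the conjugation ambiguity therefore collapses into the phase ambiguity and (i) again carries the single constant $z$ through each end block and matches it to the middle. When $\mathcal K(f)=\emptyset$ condition (ii) is vacuous, and the only remaining point is that the left and right phases of $g$ agree with those of $f$ up to the allowed ambiguity; this is extracted from $|g|=|f|$ on the ``mixed'' intervals $I_{\kappa_-(f)+1},\dots,I_{\kappa_-(f)+L-1}$, on each of which $|f|^2$ has the shape $P^2+Q^2+2PQ\cos(\arg\xi_2-\arg\xi_1)$ with $P,Q$ already determined up to sign. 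Assembling the pieces yields $g=zf$ globally (or $g=z\bar f$ before the normalization).

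For necessity I would contrapose and exhibit equimodular companions. If (i) fails at some $k$, then $c_k=\dots=c_{k+L-2}=0$ and $f=f_1+f_2$ with $f_1$ supported in $(-\infty,k+L-1]$ and $f_2$ in $[k+L-1,\infty)$; continuity of $\phi$ (so $\phi(0)=\phi(L)=0$) makes both summands vanish at the lone common point, hence $|f_1+\mu f_2|=|f_1|+|f_2|=|f|$ for every $\mu\in\T$, while for generic $\mu$ the function $f_1+\mu f_2$ is neither $zf$ nor $z\bar f$ — and (i) can fail only for $K_-(f)+1\le k\le K_+(f)-L+1$, exactly the range making both $f_1$ and $f_2$ nonzero. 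If instead $\mathcal K(f)\ne\emptyset$ but (ii) fails at some $k_0$, the window $W=\{c_{k_0-L+2},\dots,c_{k_0}\}$ is real up to a common phase $\eta$; I would set $d_l=c_l$ for $l\le k_0$ and $d_l=\eta^2\overline{c_l}$ for $l>k_0$. Because $W$ is $\eta$-real the two prescriptions coincide on $W$, and on each active $I_j$ the used coefficients lie entirely in $\{l\le k_0\}$ or entirely in $W\cup\{l>k_0\}$, so $g|_{I_j}$ equals $f|_{I_j}$ or $\eta^2\overline{f|_{I_j}}$, giving $|g|=|f|$. That $g$ is neither $zf$ nor $z\bar f$ follows from the presence of coefficients that are \emph{not} $\eta$-real both strictly left and strictly right of $W$; and their existence on both sides is forced by $\mathcal K(f)\ne\emptyset$ together with the definitions of $\kappa_\pm(f)$ — if, say, every non-$\eta$-real coefficient lay to the right of $W$, then $c_{K_-(f)}$ would be simultaneously $\xi_1$-real and $\eta$-real, hence $\xi_1=\pm\eta$, contradicting that $c_{\kappa_-(f)+1}$ is $\eta$-real but not $\xi_1$-real.

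I expect the main obstacle to be the interface analysis between the ``genuinely complex'' middle block and the two real-up-to-phase end blocks: justifying that the cutoffs $\kappa_-(f)+1$ and $\kappa_+(f)+L-3$ in (ii) (and the range in (i)) are exactly the ones required, correctly tracking how a conjugation flip gets absorbed into a unimodular factor as one crosses from the middle into an end block, and dispatching the degenerate case $\mathcal K(f)=\emptyset$, where (ii) says nothing yet one must still prove the two end phases of any equimodular $g$ are compatible with those of $f$.
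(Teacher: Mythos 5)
Your proposal is sound in outline and, on the necessity direction, takes a genuinely different route from the paper. The paper proves necessity by projection: by Lemma \ref{cpr.lem} conjugate phase retrievability of $f$ forces every real part $\Re(e^{i\theta}\overline{\xi}f)$ to be phase retrievable in $\mathcal S_\R(\phi)$, and the known characterization in Lemma \ref{real.pr.lem} then yields \eqref{f.real.cond}, while a $\theta$ adapted to a window violating \eqref{global.assump} yields the contradiction for (ii). You instead exhibit explicit equimodular companions: splitting $f$ at a run of $L-1$ vanishing coefficients and rotating one piece when (i) fails, and replacing $c_l$ by $\eta^2\overline{c_l}$ for $l>k_0$ when (ii) fails at $k_0$. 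Both constructions do work (one checks $|g|=|f|$ interval by interval, and that $g\notin\{zf,z\bar f\}$; for the second construction your one-sided argument does extend symmetrically, because failure of \eqref{global.assump} at $k_0$ forces $k_0<K_+(f)$ and $c_{\kappa_+-1}$ lies in or to the right of the window, and you should state that you may assume (i), so the window is not identically zero and $\eta$ is well defined). This is more elementary and self-contained than the paper's argument, at the price of more case checking; the paper buys brevity by quoting the two lemmas. Your sufficiency argument is essentially the paper's proof in different clothing: the paper records your phase-plus-conjugation flag as an orthogonal matrix ${\bf U}_j$ acting on the $\R^2$-valued coefficients $\mathcal A c_k$, uses \eqref{global.assump} to produce two non-parallel coefficients in the overlap window and so pin ${\bf U}_{K+1}$ (your ``no flip'' step), and treats the two tails by Lemma \ref{real.pr.lem} (your ``conjugation collapses into phase'' step).

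The one genuinely under-developed point is the one you flag: sufficiency when $\mathcal K(f)=\emptyset$ but $f$ is not real up to a phase. Reading off $\cos(\arg\xi_2-\arg\xi_1)$ from an expansion $P^2+Q^2+2PQ\cos(\cdot)$ is shaky as written: you must first produce a mixed interval on which $PQ\not\equiv0$, and in any case the hypothesis at your disposal is local conjugate phase retrieval, not an explicit quadratic formula. The clean repair inside your own framework is: by \eqref{f.real.cond} applied at $k=\kappa_--L+2$, the window of $I_{\kappa_-+1}$, namely $\{c_{\kappa_--L+2},\dots,c_{\kappa_-+1}\}$, contains a nonzero $\xi_1$-real coefficient together with $c_{\kappa_-+1}$, which is nonzero and not $\xi_1$-real; hence local conjugate phase retrieval on this single interval already fixes the relative phase of the two real blocks up to one global unimodular constant and conjugation, after which flips inside the two real-up-to-phase tails are absorbed exactly as in your end-block argument. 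It is worth noting that the published proof is thin at the same spot: its Case 1 invokes \eqref{global.assump}, which is not assumed when $\mathcal K(f)=\emptyset$, so this extra window argument is needed on either route.
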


 Observe that $\mathcal K(f)\ne \emptyset$ if and only if $-\infty<\kappa_-+2\le \kappa_+<\infty$ and \begin{equation}\label{coeff.eq.0}c_{\kappa_-+1}\ne 0{\rm\ and\ }c_{\kappa_+-1}\ne 0.\end{equation} For a nonzero function $f\in \mathcal S(\phi)$ with $\mathcal K(f)=\emptyset$, its decomposition \eqref{f.decomp}  becomes as in \eqref{kfempty.decom}, and  it is conjugate phase retrievable in  $ \mathcal S(\phi)$ if and only if  \eqref{f.real.cond}  holds. We remark  that the equality  in \eqref{global.assump} holds for  $L=2$ if and only if ${\mathcal K}(f)=\emptyset$,  then   \eqref{f.real.cond} and \eqref{global.assump} become \eqref{complex.sis.pr.eq2} and \eqref{complex.sis.pr.eq3} respectively and the decomposition of a conjugate phase retrievable function in \eqref{f.decomp} is as in \eqref{kfempty.decom}.
To prove  Theorem  \ref{global.thm}, we recall a characterization of phase retrieval in $\mathcal S_\R(\phi)$  and the relation between  phase retrieval in $\mathcal S_\R(\phi)$ and  conjugate phase retrieval in $\mathcal S(\phi)$.
 \begin{lemma}\label{real.pr.lem} {\rm[\cite{CCSW20}, Theorem 3.2]  Let $\phi$ be a real-valued continuous function satisfying \eqref{supportlength.def2} and \eqref{L.assump}, and the real shift-invariant space $\mathcal S_\R(\phi)$ is locally phase retrievable on the  intervals $I_j=(j,j+1), j\in\Z$. Then  a nonzero function $f(x)=\sum_{k\in \mathbb{Z}}d_k\phi(x-k)\in \mathcal S_{\R}(\phi)$ is phase retrievable if and only if
\begin{equation*}
\sum_{l=0}^{L-2}|d_{k+l}|^2\neq 0
\end{equation*}
for all $K_{-}(f)-L+1<k<K_{+}(f)+1$.}
\end{lemma}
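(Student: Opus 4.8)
The plan is to prove the two implications of this biconditional separately, writing $\psi_k:=\phi(\cdot-k)$, which by \eqref{supportlength.def2} is supported on $[k,k+L]$, so that exactly the $L$ shifts $\psi_{j-L+1},\dots,\psi_j$ are active on each interval $I_j=(j,j+1)$. Throughout I would use the natural non-degeneracy that the restrictions $\{\psi_k|_{I_j}\}_{k=j-L+1}^{j}$ are linearly independent — equivalently, that the $L$ pieces of $\phi$ on $(0,1)$ are independent, which holds for the B-spline generators and is what makes the coefficients $d_k$ of $f$ well defined — so that I may pass freely between a functional identity $g=\pm f$ on an interval and the matching identities among the coefficients.

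\textbf{Necessity.} I would argue the contrapositive. Suppose $\sum_{l=0}^{L-2}|d_{k^*+l}|^2=0$ for some $k^*$ with $K_-(f)-L+1<k^*<K_+(f)+1$, so that $d_{k^*}=\dots=d_{k^*+L-2}=0$ are $L-1$ consecutive zeros. Since $\psi_k$ is supported on $[k,k+L]$, the shifts of index $\le k^*-1$ and those of index $\ge k^*+L-1$ have supports meeting only at the single point $k^*+L-1$, so $f=f_1+f_2$ splits into pieces $f_1:=\sum_{k\le k^*-1}d_k\psi_k$ and $f_2:=\sum_{k\ge k^*+L-1}d_k\psi_k$ with essentially disjoint supports. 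The range of $k^*$ forces $K_-(f)\le k^*-1$ and $K_+(f)\ge k^*+L-1$, so both $f_1$ and $f_2$ are nonzero. Then $g:=f_1-f_2$ satisfies $|g(x)|=|f(x)|$ for all $x$, while $g=f$ would force $f_2=0$ and $g=-f$ would force $f_1=0$; hence $f$ is not phase retrievable.

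\textbf{Sufficiency.} Assume the coefficient condition and take any $g=\sum_k e_k\psi_k\in\mathcal S_\R(\phi)$ with $|g(x)|=|f(x)|$ on $\R$. On each interval $I_j$ where $f|_{I_j}\neq0$, local phase retrievability supplies a sign $\epsilon_j\in\{\pm1\}$ with $g=\epsilon_j f$ on $I_j$, which by the non-degeneracy reads $e_k=\epsilon_j d_k$ for $k=j-L+1,\dots,j$. Because the hypothesis forbids $L-1$ — and hence $L$ — consecutive zero coefficients inside the support range, no interval meeting the support is inactive, and for consecutive active intervals $I_j,I_{j+1}$ the overlapping index block $\{j-L+2,\dots,j\}$ cannot be all zero. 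Comparing the two representations there gives $\epsilon_j d_k=\epsilon_{j+1}d_k$ for some $d_k\neq0$, so $\epsilon_j=\epsilon_{j+1}$. Propagating this across the connected run of active intervals yields a single global sign $\epsilon$ with $e_k=\epsilon d_k$ for all $k$, i.e. $g=\epsilon f$.

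\textbf{Main obstacle.} The real content sits in the sign propagation: local phase retrieval fixes the sign on each interval independently, and the global conclusion can fail exactly when two active intervals decouple. The coefficient condition is precisely the device that prevents this — forbidding a block of $L-1$ consecutive zeros guarantees that neighbouring active intervals share a nonzero coefficient, which forces their signs to agree. The delicate points I expect are (i) justifying the local-independence non-degeneracy; absent it, one must instead run a junction-by-junction continuity argument reinforced by the observation that a sign flip at $x=j+1$ would force the $L-1$ straddling coefficients $d_{j-L+2},\dots,d_j$ to vanish; and (ii) the boundary bookkeeping verifying that the relevant zero-windows really do lie in the range $(K_-(f)-L+1,K_+(f)+1)$ at the two ends of the support.
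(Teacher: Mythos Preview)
This lemma is not proved in the present paper; it is quoted from \cite{CCSW20}, Theorem~3.2, and used as a black box, so there is no in-paper argument to compare against. Your proposal is precisely the standard proof from that reference: the necessity direction via the disjoint-support split $f=f_1+f_2$ across a run of $L-1$ vanishing coefficients is correct as written, and the sufficiency direction via local sign recovery $g|_{I_j}=\epsilon_j f|_{I_j}$ followed by propagation through the overlapping coefficient block $\{j-L+2,\dots,j\}$ is exactly the intended mechanism.

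The one point you flag --- local linear independence of $\{\phi(\cdot-k)|_{I_j}\}_{k=j-L+1}^{j}$ --- is genuinely needed to pass from the functional identity $g|_{I_j}=\epsilon_j f|_{I_j}$ to the coefficientwise identity $e_k=\epsilon_j d_k$, and you are right that it does not follow formally from local phase retrievability in the sense defined here (two functions that agree on $I_j$ trivially satisfy $|f|=|g|$ there with $g=+f$). In \cite{CCSW20} this independence is part of the standing setup on $\phi$; in the present paper it is stated at the start of Section~\ref{sampling.sec} as a separate hypothesis (and is asserted to be necessary for local conjugate phase retrieval). So your ``obstacle (i)'' is not a defect of the argument but simply an implicit hypothesis that the cited theorem carries. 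The boundary bookkeeping in your ``obstacle (ii)'' is handled by exactly the index check you sketch: for consecutive active intervals $I_j,I_{j+1}$ with $K_-(f)\le j\le K_+(f)+L-2$, the window index $k=j-L+2$ lies in $(K_-(f)-L+1,\,K_+(f)+1)$.
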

\begin{lemma}\label{cpr.lem}{\rm[\cite{CCS22}, Theorem 2.3]  Let $\phi$ be a real-valued continuous function satisfying \eqref{supportlength.def2} and \eqref{L.assump}. If a function $f(x)=\sum_{k\in \mathbb{Z}}c_k\phi(x-k)\in \mathcal S(\phi)$ is  conjugate phase retrievable in $\mathcal S(\phi)$, then for all $a,b\in \mathbb{R}$,  the linear combinations $g(x)=\sum_{k\in \mathbb{Z}}(a\Re c_k+b\Im c_k)\phi(x-k)$ are phase retrievable in the real subspace $\mathcal S_\R(\phi)$.}
\end{lemma}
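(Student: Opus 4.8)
The plan is to exploit the geometric picture behind conjugate phase retrieval: recovering $f$ up to a unimodular constant and conjugation is the same as recovering the $\R^2$-valued map $x\mapsto(\Re f(x),\Im f(x))$ up to an orthogonal transformation, since multiplication by $z\in\T$ is a rotation and conjugation is a reflection. Phase retrieval of the real combination $g$ then amounts to recovering one orthogonal coordinate up to sign. Since scaling by a nonzero constant preserves phase retrievability and the case $a=b=0$ gives $g\equiv0$ (trivially phase retrievable), I normalize $a^2+b^2=1$ and write $a=\cos\theta,\ b=\sin\theta$. With this choice
\[
g=\Re\!\left(e^{-i\theta}f\right)=\sum_{k}(a\Re c_k+b\Im c_k)\phi(\cdot-k),\qquad g^\perp:=\Im\!\left(e^{-i\theta}f\right)=\sum_{k}(a\Im c_k-b\Re c_k)\phi(\cdot-k),
\]
so that $e^{-i\theta}f=g+ig^\perp$ and therefore $|f(x)|^2=g(x)^2+g^\perp(x)^2$ for all $x\in\R$, with $g,g^\perp\in\mathcal S_\R(\phi)$ both real-valued.

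Next I would take an arbitrary real-valued $\psi=\sum_kd_k\phi(\cdot-k)\in\mathcal S_\R(\phi)$ with $|\psi(x)|=|g(x)|$ on $\R$ and feed a carefully built complex companion into the hypothesis. The key construction is to keep the ``perpendicular'' part $g^\perp$ frozen and replace only $g$ by $\psi$, setting $h:=e^{i\theta}(\psi+ig^\perp)\in\mathcal S(\phi)$. Because $|\psi(x)|=|g(x)|$ forces $\psi(x)^2=g(x)^2$, one gets $|h(x)|^2=\psi(x)^2+g^\perp(x)^2=g(x)^2+g^\perp(x)^2=|f(x)|^2$, so $h$ and $f$ have identical magnitude on all of $\R$. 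Conjugate phase retrievability of $f$ then supplies a unimodular $z=p+iq\in\T$ (with $p,q\in\R$, $p^2+q^2=1$) such that either $h=zf$ or $h=z\overline{f}$.

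It then remains to verify that each alternative forces $\psi=\pm g$, which is elementary algebra once the constraint $p^2+q^2=1$ is used. In the case $h=zf$, cancelling $e^{i\theta}$ and comparing real and imaginary parts of $\psi+ig^\perp=z(g+ig^\perp)$ yields $\psi=pg-qg^\perp$ together with the coupling $(1-p)g^\perp=qg$; multiplying the first identity by $1-p$ and substituting the coupling collapses it to $(1-p)\psi=(p-1)g$, whence $\psi=-g$ if $p\neq1$ and $\psi=g$ if $p=1$ (which forces $q=0$). In the case $h=z\overline{f}$, absorbing $e^{-2i\theta}$ into a new unimodular $w=p+iq$ reduces the identity to $\psi=pg+qg^\perp$ with coupling $(1+p)g^\perp=qg$, and the same manipulation gives $\psi=g$ if $p\neq-1$ and $\psi=-g$ if $p=-1$. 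In every branch $\psi=\pm g$, which is precisely phase retrievability of $g$ in $\mathcal S_\R(\phi)$.

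I expect the only genuinely delicate point to be the design of $h$: one must recognize that the right object to hand to the conjugate phase retrieval hypothesis is the magnitude-matched function with $g^\perp$ unchanged, rather than trying to manipulate $\psi$ directly. The subsequent worry---that conjugate phase retrieval only pins $h$ down up to a full rotation $z\in\T$ rather than a sign---is dissolved by the coupling equation between $g$ and $g^\perp$, which together with $p^2+q^2=1$ removes the apparent rotational freedom and leaves exactly the two signs. Everything after the construction is routine case-checking.
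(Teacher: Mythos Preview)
The paper does not actually prove this lemma; it is quoted verbatim from \cite{CCS22} (Theorem~2.3 there) and used as a black box, so there is no in-paper argument to compare against. Your proof is correct and self-contained: the construction $h=e^{i\theta}(\psi+ig^\perp)$ is exactly the right companion to feed into the conjugate phase retrieval hypothesis, and the algebra that collapses the rotational freedom $z=p+iq$ down to $\psi=\pm g$ via the coupling $(1\mp p)g^\perp=qg$ together with $p^2+q^2=1$ is clean and complete. Both the trivial case $a=b=0$ and the normalization to $a^2+b^2=1$ are handled properly, and $h\in\mathcal S(\phi)$ since $\psi,g^\perp\in\mathcal S_\R(\phi)$.

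One small remark for polish: in the branch $p\neq 1$ of the first case you implicitly obtain the side relation $g^\perp=\frac{q}{1-p}\,g$, which is a genuine constraint on $f$ (it forces $g$ and $g^\perp$ to be proportional). This is not a gap---it simply means that for a generic $f$ this branch never occurs and one always lands in $p=1$, $\psi=g$; when $g,g^\perp$ happen to be collinear the branch is live and still yields $\psi=-g$. You might add a sentence making this explicit so the reader does not pause.
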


\begin{proof}[Proof of Theorem \ref{global.thm}] Set $K_\pm=K_\pm (f)$ and $ \kappa_\pm=\kappa_\pm(f)$, and write $f$ as in  \eqref{f.decomp}-\eqref{kf.def}.
$\Longrightarrow:$
i) If  the conjugate phase retrievable function $f\in \mathcal S(\phi)$ is real-valued,
\eqref{f.real.cond} follows immediately   by Lemmas \ref{real.pr.lem} and \ref{cpr.lem}.
Otherwise, by \eqref{coeff.eq.1}-\eqref{coeff.eq.inf}, we have  \begin{equation}\label{coeff.eq.2}
     K_-(\Re(\overline{\xi_1}f))=K_-{\rm\ and\ }K_+(\Re(\overline{\xi_2}f))=K_+.
     \end{equation}
 By Lemma \ref{cpr.lem},   the real-valued functions $\Re(\overline {\xi_1}f)$ and $\Re (\overline{\xi_2}f)$ are phase retrievable in $\mathcal S_\R(\phi)$.  This together with  \eqref{coeff.eq.2} and  Lemma \ref{real.pr.lem} implies that
 \begin{equation}\label{coeff.con.1}
 \sum_{l=0}^{L-2}|c_{k+l}|\ne 0 {\rm \ for\  all\ } K_--L+1<k\le K_+(\Re (\overline{\xi_1} f)){\rm\  and  \ }K_-(\Re (\overline{\xi_2} f))-L+1<k< K_++1.
   \end{equation}
   Observe that   for the case that $\Re (\xi_1\overline{\xi_2})\ne  0$, we have
     \begin{equation*}\label{coeff.eq.3}
     K_+(\Re(\overline{\xi_1}f))\ge\kappa_+-1\ge \kappa_-\ge K_-(\Re(\overline{\xi_2}f))-L+1.
     \end{equation*} Hence \eqref{f.real.cond} holds by \eqref{coeff.con.1}. Otherwise following a similar argument on the function  $\Re (e^{-\frac{\pi}{4}i}\overline{\xi_1} f)$ with
     \begin{equation*}
     K_-(\Re (e^{-\frac{\pi}{4}i}\overline{\xi_1} f))\le K_+(\Re(\overline{\xi_1}f)){\rm \  and \ }  K_+(\Re (e^{-\frac{\pi}{4}i}\overline{\xi_1} f))\ge K_-(\Re(\overline{\xi_2}f)),
     \end{equation*}  Conclusion i) is proved by \eqref{coeff.con.1} and Lemmas \ref{real.pr.lem}-\ref{cpr.lem}. %by Lemma \ref{cpr.lem},  we have is phase retrievable. This together with  Lemma \ref{real.pr.lem}, \eqref{coeff.con.1} and

ii) Suppose, on the contrary, that \eqref{global.assump} does not hold for the case $\mathcal K(f)\ne\emptyset$. Then $-\infty<\kappa_-+2\le \kappa_+<\infty$ and there exists $k_0\in [\kappa_-+1, \kappa_++L-3]$ such that
\begin{equation}\label{contra.assum}
\sum_{k_0-L+2\le n_1<n_2\le k_0  }|\Im(c_{n_1}\overline{c_{n_2}})|= 0.
\end{equation}

For the case that $k_0\in [\kappa_-+1, \kappa_+-1]$,  there exists $n_0\in [k_0-L+2, k_0]$ such that $c_{n_0}\ne 0$ by \eqref{f.real.cond}. Set $\theta_0=-\arg c_{n_0}\in[-\pi,\pi)$. By  \eqref{coeff.eq.1}-\eqref{kf.def} and \eqref{contra.assum}, we have  $\kappa_-<n_0<\kappa_+$,
\begin{equation*}
\Im (e^{i\theta_0}c_k)=0{\rm\ for\ all\ }k\in [k_0-L+2, k_0],\end{equation*} and
\begin{equation*}
\Im( e^{i\theta_0}c_{k_1})\ne 0{\rm \ and\ }\Im (e^{i\theta_0}c_{k_2})\ne 0  {\rm \ for\ some\ } k_1\le\kappa_-< k_0<\kappa_+\le k_2.
  \end{equation*}
  This implies that $\Im (e^{i\theta_0}f)$ is nonzero and  not phase retrievable in $\mathcal S_\R(\phi)$ by Lemma \ref{real.pr.lem}. It contradicts the conjugate phase retrievability of $f$ by Lemma \ref{cpr.lem}, and hence  \eqref{global.assump} holds.%$c_{\kappa_+-1}\ne 0$,

  For the case that $\kappa_+\le k_0\le \kappa_++L-3$, we have \begin{equation*}
  c_l=0{\rm\ for\ all\ } \kappa_+\le l\le k_0,
   \end{equation*}
   and if $k_0-L+2\le\kappa_-$, $c_l=0$ for all $k_0-L+2\le l\le \kappa_-$ and $\Im c_l \overline{c_{\kappa_+-1}}=0$ for all $\kappa_-+1\le l\le \kappa_+-1$, otherwise $\Im c_l \overline{c_{\kappa_+-1}}=0$ for all $k_0-L+2\le l\le \kappa_+-1$. This together with  \eqref{coeff.eq.1}-\eqref{kf.def} and \eqref{coeff.eq.0} implies that there exist $l_1\ge k_0+1$ and $l_2\le k_0-L+1$ such that $\Im c_{l_1} \overline{c_{\kappa_+-1}}\ne 0 $ and  $\Im c_{l_2} \overline{c_{\kappa_+-1}}\ne 0 $. Set $\theta_1=-\arg c_{\kappa_+-1}$. We have $\Re (e^{i\theta_1}f)$ is not phase retrievable, which contradicts the conjugate phase retrievability of $f$ by Lemma \ref{cpr.lem}, and hence  Conclusion ii) holds.

$\Longleftarrow:$  For the case that $\kappa_-=K_+$, $f$ is real-valued up to a unimodular constant. The conjugate phase retrievability of $f$ follows by Lemma \ref{real.pr.lem}. Now we assume that $\kappa_-<K_+$.
Define the isomorphism
\begin{equation}\label{map.def}
\mathcal A: \C\ni z\longmapsto \begin{pmatrix} \Re z \\ \Im z
 \end{pmatrix}\in  \R^2
 \end{equation}
 between  $\C$ and $\R^2$. Then the function $f(x)=\sum_{k\in\Z}c_k\phi(x-k)$ is conjugate phase retrievable in $\mathcal S(\phi)$ if and only if the $\R^2-$valued  function \begin{equation*}(\mathcal A f)(x):=\mathcal A (f(x))=\begin{pmatrix}\Re f(x) \\ \Im f(x)\end{pmatrix}, x\in\R,\end{equation*} is determined up to an orthogonal transform on $\R^2$ if and only if the sequence
 $(\mathcal Ac_{k})_{k\in\Z}$ is  determined up to an orthogonal matrix in $\R^{2\times2}$.
 By the local conjugate phase retrieval of the function $f$ on the intervals $I_j, j\in\Z$, and
  the restrictions
\begin{equation*}
f|_{I_j}(x)=\sum_{k=j-L+1}^{j}c_k\phi(x-k), x\in I_j,
\end{equation*} we have  that the $\R^2$-valued function $\mathcal A f$ and the coefficient vectors  $\mathcal A {c}_k, j-L+1\le k\le j$, can be locally determined, up to an orthogonal matrix ${\bf U}_j$, from the phaseless measurements $|f(x)|, x\in I_j$.

By \eqref{coeff.eq.1}, we have $c_{\kappa_-+1}\ne 0$. Without loss of generality, we assume that ${\bf U}_{\kappa_-+1}={\bf I}_2$, where ${\bf I}_n$ is the identity matrix of size $n\times n$, otherwise replacing $\mathcal A f$ by ${\bf U}^{-1}_{\kappa_-+1}\mathcal A f$. Then we have
\begin{equation}\label{hypothesis.1}
  c_k, \kappa_--L+2\le k\le \kappa_-+1, {\rm \ can\ be\ determined\ from\ } |f(x)|, x\in  I_{\kappa_-+1},\end{equation}
  and
  \begin{equation*}
  K_+(f|_{\cup_{k\le \kappa_-}I_k})\ge \kappa_--L+2
  \end{equation*} by \eqref{f.real.cond}.
  As the restriction $f|_{\cup_{k\le\kappa_-}I_k}$ on the interval $\cup_{k\le \kappa_-}I_k$ is real-valued up to a unimodular constant, we have
  \begin{equation}\label{hypothesis.2}
  c_k, k\le \kappa_-, {\rm \ can\ be\  determined\ from\ } |f(x)|, x\in \cup_{k\le \kappa_-} I_k,\end{equation} by \eqref{f.real.cond} and Lemma \ref{real.pr.lem}.  Next  we prove the following claim:
\begin{equation*}\label{induct.claim}
\mathcal A c_k, k\ge\kappa_-+1 {\rm \ can\ be\  determined\ from\ } |f(x)|, x\in \cup_{k> \kappa_-} I_k,
\end{equation*}
by induction.  Inductively we assume that $\mathcal A c_k,   k\le K$ are  determined.
The inductive proof is completed if $K\ge K_+$. Otherwise $K< K_+$.
 By  the local conjugate phase retrieval on the interval $ I_{K+1}$,  the coefficient vectors $\mathcal A c_k, K-L+2\le k\le K+1$, are determined up to an orthogonal transform on $\R^2$. Denote the coefficient vectors  recovered from the phaseless measurements $|f(x)|, x\in I_{K+1},$ by $\mathcal A d_k, K-L+2\le k\le K+1$.

\vskip 2mm
{\bf Case 1}: $\kappa_-<K<\kappa_++L-2$

 By  \eqref{global.assump}, there exist  entries $K-L+2\le k_1,k_2\le K$ such that $\Im c_{k_1}\overline{c_{k_2}}\ne0$. It implies that the vectors $\mathcal Ac_{k_1}, \mathcal Ac_{k_2}$ form a basis for $\R^2$.
   Then we have ${\bf U}_{K+1}=\begin{pmatrix}\mathcal A d_{k_1} &\mathcal A d_{k_2}\end{pmatrix}\begin{pmatrix}\mathcal A c_{k_1} &\mathcal A c_{k_2}\end{pmatrix}^{-1}$ is an orthogonal matrix in $\R^{2\times 2}$ and  \begin{equation*}
\mathcal A{ d}_{k}={\bf U}_{K+1}\mathcal A{ c}_{k}, {\rm\ for\ all\ }K-L+2\le k\le K+1.
\end{equation*}
Hence $\mathcal A c_{K+1}={\bf U}^{-1}_{K+1}\mathcal Ad_{K+1}$ is determined.
\vskip 2mm
{\bf Case 2:} $K>\kappa_++L-3$

Observe that the restriction $f|_{\cup_{k\ge \kappa_++L-1}I_{k}}$ is  real-valued  up to a unimodular constant and
\begin{equation*} K_-(f|_{\cup_{k\ge \kappa_++L-1}I_{k}})\le \kappa_++L-2\le K.\end{equation*} By \eqref{f.real.cond} and Lemma \ref{real.pr.lem},
 we have $c_{K+1}$ and $\mathcal Ac_{K+1}$ is uniquely determined.

 This together with the inductive hypothesis and \eqref{hypothesis.1} implies that $\mathcal A c_k,  \kappa_-+1 \le k\le K+1$, can be uniquely determined. The inductive argument can proceed.
Combining \eqref{hypothesis.1} and \eqref{hypothesis.2}, it completes the proof.
\end{proof}

\section{Conjugate phaseless (Hermite) sampling and reconstruction in a complex shift-invariant space}\label{cps}
In this section we consider a conjugate phaseless (Hermite) sampling and reconstruction problem  which determines a function $f\in \mathcal S(\phi)$, up to a unimodular constant and conjugation, from its phaseless Hermite samples $|f(\gamma)|, \gamma\in\Gamma+\Z$, and $|f'(\gamma)|,\gamma\in\Gamma'+\Z$, taken on the shift-invariant sets $\Gamma+\Z$ and $\Gamma'+\Z$ with finite densities respectively. Here  the sampling density of a set $\Lambda\subset \R$  is defined by
\begin{equation*}\label{samplingrate.def}
D(\Lambda):=\lim_{b-a\to \infty}   \frac{\#(\Lambda\cap [a, b])}{b-a},
\end{equation*}
where   $\# E$ is the cardinality of a set $E$.%Based on Theorem \ref{global.thm}, i
\subsection{Conjugate phaseless sampling and reconstruction in $\mathcal S(\phi)$}\label{sampling.sec}
 Given a compactly supported function $\phi$ and an open set $A\subset \R$, we say $\phi$ has local linear independence on $A$ if $\sum_{k\in\Z}c_k\phi(x-k)=0$ for  all $x\in A$ implies that $c_k=0$ for all $k\in K_A$ \cite{Sun10}, and define
\begin{equation*}
\Phi_A(x):=\left(\phi(x-k)\right)_{k\in K_A}, x\in A,
\end{equation*} and
\begin{equation*}
W_{\phi,A}:=\span\{\Phi_A(x)(\Phi_A(x))^T, x\in A\}\subset \mathbb H_L,
\end{equation*} where $ K_A=\{k, \phi(x-k)\not\equiv 0{\rm \ on\ }A\}$ and $\mathbb H_L$ be the space of all symmetric real matrices of size $L\times L$.
One may easily verify that  the local linear independence of the generator $\phi$ on $A$ is necessary for the local conjugate phase retrieval of $\mathcal S(\phi)$ on $A$.
In the following theorem, we provide a sufficient condition  for the local conjugate phase retrieval of $\mathcal S(\phi)$ and  the  conjugate phaseless sampling and reconstruction in $\mathcal S(\phi)$.

\begin{theorem}\label{sufficiency.thm}
{\rm Let $\phi$ be a real-valued continuous function satisfying \eqref{supportlength.def2} and \eqref{L.assump}.
Assume that $\phi$ has local linear independence on the intervals $I_j=(j,j+1), j\in\Z$, and \begin{equation}\label{spanning.prop}W_{\phi,(0,1)}=\mathbb H_L.
 \end{equation}Then the shift-invariant space $\mathcal S(\phi)$ is locally conjugate phase retrievable on $I_j, j\in\Z$, and  there exists a finite set $\Gamma\subset (0,1)$ of $\frac{L(L+1)}{2}$ distinct points such that any nonzero function $ f\in \mathcal S(\phi)$ with \eqref{f.real.cond} and \eqref{global.assump} can be determined, up to a unimodular constant and conjugation, from the phaseless samples $|f(\gamma)|, \gamma\in\Gamma+\Z$, taken on the set $\Gamma+\Z$.}
\end{theorem}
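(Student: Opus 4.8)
The plan is to prove Theorem~\ref{sufficiency.thm} in three stages: first establish local conjugate phase retrievability on a single interval $I_0=(0,1)$ from \emph{all} point evaluations there, then reduce this to finitely many sample points, and finally stitch the local data together using Theorem~\ref{global.thm}. For the first stage, fix $f,g\in\mathcal S(\phi)$ with $|f(x)|=|g(x)|$ for all $x\in(0,1)$. Writing $f|_{I_0}(x)=\sum_{k=-L+1}^{0}c_k\phi(x-k)=\Phi_{(0,1)}(x)^T\mathbf c$ and similarly $g|_{I_0}(x)=\Phi_{(0,1)}(x)^T\mathbf d$ with $\mathbf c,\mathbf d\in\C^L$, the equality of magnitudes means $\Phi_{(0,1)}(x)^T(\mathbf c\,\mathbf c^* )\,\overline{\Phi_{(0,1)}(x)}=\Phi_{(0,1)}(x)^T(\mathbf d\,\mathbf d^*)\,\overline{\Phi_{(0,1)}(x)}$ for all $x\in(0,1)$. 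Because $\phi$ is real-valued, $\Phi_{(0,1)}(x)$ is a real vector, so this reads $\langle \Phi\Phi^T, \mathbf c\mathbf c^*\rangle=\langle\Phi\Phi^T,\mathbf d\mathbf d^*\rangle$ for all $x$; separating real and imaginary parts and invoking the spanning hypothesis \eqref{spanning.prop} that $W_{\phi,(0,1)}=\mathbb H_L$ forces $\Re(\mathbf c\mathbf c^*)=\Re(\mathbf d\mathbf d^*)$ and $\Im(\mathbf c\mathbf c^*)=\Im(\mathbf d\mathbf d^*)$, i.e.\ $\mathbf c\mathbf c^*=\mathbf d\mathbf d^*$ as Hermitian matrices. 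A rank-one Hermitian matrix determines its generating vector up to a unimodular constant, so $\mathbf d=z\mathbf c$ for some $z\in\mathbb T$; but here I must be careful, since conjugation is also allowed — actually the correct reading is via the isomorphism $\mathcal A$ of \eqref{map.def}: writing $\mathbf c=\mathbf a+i\mathbf b$ with $\mathbf a,\mathbf b\in\R^L$, the matrix $\mathbf c\mathbf c^*=(\mathbf a\mathbf a^T+\mathbf b\mathbf b^T)+i(\mathbf b\mathbf a^T-\mathbf a\mathbf b^T)$ determines the $2\times L$ real matrix $[\mathbf a\ \mathbf b]^T$ up to an orthogonal transformation of $\R^2$ acting on the left, which is exactly the statement that $f|_{I_0}$ is recovered up to a unimodular constant and conjugation (rotations and reflections of $\R^2$). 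This gives local conjugate phase retrievability of $\mathcal S(\phi)$ on each $I_j$ by translation.

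For the second stage, I reduce from all of $(0,1)$ to a finite set $\Gamma\subset(0,1)$ of $\tfrac{L(L+1)}{2}=\dim\mathbb H_L$ points. Since $W_{\phi,(0,1)}=\mathbb H_L$ and $W_{\phi,(0,1)}$ is spanned by the matrices $\Phi_{(0,1)}(x)\Phi_{(0,1)}(x)^T$, $x\in(0,1)$, I can choose $\tfrac{L(L+1)}{2}$ points $\gamma_1,\dots,\gamma_{L(L+1)/2}\in(0,1)$ such that the corresponding rank-one symmetric matrices already span $\mathbb H_L$; this is a standard finite-dimensional extraction (pick points greedily, each new one increasing the span dimension, which is possible precisely because the full family spans). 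With $\Gamma=\{\gamma_1,\dots\}$, knowing $|f(\gamma)|$ for $\gamma\in\Gamma$ already pins down $\langle\Phi\Phi^T,\mathbf c\mathbf c^*\rangle$ on a spanning set of $\mathbb H_L$, hence determines $\mathbf c\mathbf c^*$, and the argument of stage one goes through verbatim on each interval using only the samples in $\Gamma+\Z$.

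For the third stage, I invoke Theorem~\ref{global.thm}: since $\mathcal S(\phi)$ is now known to be locally conjugate phase retrievable on the $I_j$, any nonzero $f\in\mathcal S(\phi)$ satisfying \eqref{f.real.cond} and \eqref{global.assump} is (globally) conjugate phase retrievable, and moreover the constructive ``$\Longleftarrow$'' direction of the proof of Theorem~\ref{global.thm} shows $f$ is \emph{reconstructed} from the local data $\{|f(x)|:x\in I_j\}$, $j\in\Z$, via sign/conjugation propagation. Combined with stage two, the local data $\{|f(x)|:x\in I_j\}$ is itself recoverable from $\{|f(\gamma)|:\gamma\in\Gamma+\Z\}$ (indeed from the finitely many samples in $\Gamma+\Z$ landing in $I_j$, noting $\Gamma+\Z$ has density $\tfrac{L(L+1)}{2}$), so $f$ is determined up to a unimodular constant and conjugation from $|f(\gamma)|$, $\gamma\in\Gamma+\Z$.

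I expect the main obstacle to be the careful bookkeeping in stage one around what ``the rank-one Hermitian matrix $\mathbf c\mathbf c^*$ determines'' means in the conjugate-phase-retrieval sense: one must verify that the orthogonal-transform ambiguity on $\R^2$ produced by the factorization $\mathbf c\mathbf c^*=MM^T+i(\text{antisymmetric})$ with $M=[\mathbf a\ \mathbf b]^T\in\R^{2\times L}$ corresponds \emph{exactly} to the ``unimodular constant and conjugation'' ambiguity and not to something larger — this is where the real-valuedness of $\phi$ (so that $\Phi_{(0,1)}(x)\in\R^L$) is essential, and where a cleaner route may be to cite the finite-dimensional equivalence between conjugate phase retrieval in $\C^L$ from a real frame and the spanning property $W=\mathbb H_L$, which the authors reference via \cite{Lai20,CCS22}. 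A secondary, purely technical point is checking that the local linear independence hypothesis on $I_j$ guarantees the coefficient map $\mathbf c\mapsto f|_{I_j}$ is injective, so that recovering $f|_{I_j}$ and recovering the coefficient vector are equivalent, as used implicitly in stage three and already exploited in the proof of Theorem~\ref{global.thm}.
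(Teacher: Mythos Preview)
Your three-stage plan is exactly the paper's approach: invoke Theorem~\ref{global.thm} to reduce to local conjugate phase retrieval on each $I_j$, use the spanning hypothesis \eqref{spanning.prop} to extract a finite set $\Gamma$ of $\tfrac{L(L+1)}{2}$ points whose outer products $\Phi_{(0,1)}(\gamma)\Phi_{(0,1)}(\gamma)^T$ form a basis of $\mathbb H_L$, and recover the Gram matrix of the $\mathcal A c_k$'s from the samples $|f(\gamma+j)|^2$. So the route is correct and matches the paper.

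The one point to clean up is the passage in stage one where you claim the spanning property yields both $\Re(\mathbf c\mathbf c^*)=\Re(\mathbf d\mathbf d^*)$ and $\Im(\mathbf c\mathbf c^*)=\Im(\mathbf d\mathbf d^*)$. This is not right: since $\Phi_{(0,1)}(x)$ is real, $\Phi\Phi^T$ is real symmetric, and its trace pairing with the skew-symmetric matrix $\Im(\mathbf c\mathbf c^*)$ is identically zero regardless of $\mathbf c$. The spanning $W_{\phi,(0,1)}=\mathbb H_L$ therefore determines only $\Re(\mathbf c\mathbf c^*)$. This is in fact exactly what you want: $\Re(\mathbf c\mathbf c^*)=\mathbf a\mathbf a^T+\mathbf b\mathbf b^T$ is the Gram matrix $(\langle\mathcal A c_n,\mathcal A c_{n'}\rangle)_{n,n'}$, and a Gram matrix determines its generating vectors up to an orthogonal transformation of $\R^2$, i.e.\ up to $O(2)$, which is the unimodular-plus-conjugation ambiguity. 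Had you actually recovered the full $\mathbf c\mathbf c^*$, the ambiguity would be only $SO(2)$ (phase alone), which is \emph{too strong} a conclusion and not what real measurements can deliver. You correctly flagged this as the delicate point; the fix is simply to drop the claim about $\Im(\mathbf c\mathbf c^*)$ and work directly with $\Re(\mathbf c\mathbf c^*)$ as the Gram matrix, exactly as the paper does (citing Lemma~4.11 of \cite{CCS22}).
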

\begin{proof}
By Theorem \ref{global.thm}, we only need to find a finite set $\Gamma\subset(0,1)$ such that any function in $ \mathcal S(\phi)$ is  local conjugate phase retrieval on the intervals $I_j=(j,j+1), j\in\Z$, from the samples taken on the set $\Gamma+j$. For any function $f(x)=\sum_{k\in\Z}c_k\phi(x-k)\in \mathcal S(\phi)$ and $j\in\Z$,
$f$
is locally conjugate phase retrievable on $I_j$ if and only if $(\mathcal Ac_{j-L+1},\ldots, \mathcal Ac_{j})$ can be  determined up to  an orthogonal matrix in $\R^{2\times 2}$
 if and only if the associated  Gram matrix $(\langle \mathcal Ac_n, \mathcal Ac_{n'}\rangle)_{j-L+1\le n,n'\le j}$ is uniquely determined,
  where $\mathcal A$ is the isomorphism in \eqref{map.def} and $L\ge 3$, see  Lemma 4.11 in  \cite{CCS22}.

 By \eqref{spanning.prop}, there exists a set $\Gamma=\{\gamma_l, 1\le l\le \frac{L(L+1)}{2}\}\subset(0,1)$  such that the outer products $\Phi(\gamma_l)(\Phi(\gamma_l))^T,  1\le l\le \frac{L(L+1)}{2}$, form a basis for ${\mathbb H}_L$. Let ${\bf E}_{mn}, 1\le m\le n\le L$, be the matrices with   all entries taking value zero  except value one at the $mn$-th and $nm$-th entries, and the set $\{{\bf F}_{n}, 1\le n\le \frac{L(L+1)}{2}\}:=\{{\bf E}_{11}, {\bf E}_{12}, \ldots, {\bf E}_{LL} \}$ forms a standard basis for
  ${\mathbb H}_L$. There exists  a nonsingular matrix ${\bf A}=(a_{ln})\in\R^{\frac{L(L+1)}{2}\times\frac{L(L+1)}{2}}$ such that
 \begin{equation*}
 \Phi(\gamma_l)(\Phi(\gamma_l))^T=\sum_{n=1}^{L(L+1)/2}a_{ln}{\bf F}_n,  1\le l\le \frac{L(L+1)}{2}.
 \end{equation*}
As
 \begin{equation*}
 |f(\gamma_l+j)|^2=\sum_{j-L+1\le n,n'\le j}c_n\overline{c_{n'}}\phi(\gamma_l-n)\phi(\gamma_l-n'), 1\le l\le \frac{L(L+1)}{2},
 \end{equation*} the inner products $\langle\mathcal Ac_n, \mathcal A c_{n'}\rangle=\Re(c_n\overline{c_{n'}}), j-L+1\le n,n'\le j$, in the Gram matrix can be uniquely determined by ${\bf A}^{-1}(|f(\gamma_l+j)|^2)_{1\le l\le \frac{L(L+1)}{2}}$. Then we have that the function $f$ is locally conjugate phase retrievable on $I_j$ and complete the proof.
\end{proof}
    The complex shift-invariant space $\mathcal S(\phi)$ generated by a compactly supported function $\phi$ is locally finite-dimensional. Local conjugate phase retrieval on a bounded open set in $\mathcal S(\phi)$ can be seen as a finite-dimensional conjugate phase retrieval. Observe that a complex point can be determined from its distances to three points which are not colinear \cite{GSWX18}, and $(c_0,c_1,c_2)\in\C^3$ is determined up to a unimodular constant and conjugation only if the inner products $\langle \mathcal Ac_i, \mathcal Ac_j\rangle, {0\le i\le j\le 2}$, are known.  We remark that the sampling density $\frac{L(L+1)}{2}$ of the set $\Gamma+\Z$ and the spanning condition \eqref{spanning.prop} in the above theorem may not be necessary for local conjugate phase retrieval in $\mathcal S(\phi)$ with  large $L$, while they are necessary and sufficient for the case that $L=3$,
see an equivalent formulation for the conjugate phase retrieval of vectors in $\C^L$ \cite{Lai20}.
\begin{corollary}\label{l3.cor}
{\rm Let $\phi$ be a real-valued continuous function with support length $L=3$. If any function $f\in \mathcal S(\phi)$ satisfying \eqref{f.real.cond} and \eqref{global.assump} can be determined, up to a  unimodular constant and conjugation, from the magnitudes $|f(x)|, x\in\R$, then $W_{\phi, [0,1]}=\mathbb H_3$.}
\end{corollary}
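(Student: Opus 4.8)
The plan is to argue by contraposition. Assume $W_{\phi,[0,1]}\neq\mathbb H_3$; I will build a function $f\in\mathcal S(\phi)$ that satisfies \eqref{f.real.cond} and \eqref{global.assump} but is \emph{not} determined, up to a unimodular constant and conjugation, by its magnitudes $|f(x)|$, $x\in\R$, contradicting the hypothesis.

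\emph{Step 1: a three-dimensional obstruction that factors as a difference of Gram matrices.} Since $\phi$ is continuous, $W_{\phi,(0,1)}=W_{\phi,[0,1]}\neq\mathbb H_3$, so there is a nonzero symmetric $M$ with $\Phi(y)^{T}M\Phi(y)=0$ for every $y\in(0,1)$, where $\Phi(y)=(\phi(y+2),\phi(y+1),\phi(y))^{T}$. Such an $M$ cannot be (positive or negative) definite: that would force $\Phi(y)\equiv0$ on $(0,1)$, hence $\phi\equiv0$ on $(0,1)\cup(1,2)\cup(2,3)$ and, by continuity, $\phi\equiv0$, contradicting $L=3$. Now comes the point where $L=3$ is used: splitting $M=M_{+}-M_{-}$ into positive and negative parts, non-definiteness means $M_{+}$ has at most two positive and $M_{-}$ at most two negative eigenvalues, so each has rank at most $2$ and can be written $M_{+}=G(c)$, $M_{-}=G(d)$ with $c,d\in\C^{3}$, where $G(v):=\Re v\,(\Re v)^{T}+\Im v\,(\Im v)^{T}$ is the Gram matrix of the pairs $(\Re v_{1},\Im v_{1}),(\Re v_{2},\Im v_{2}),(\Re v_{3},\Im v_{3})$ in $\R^{2}$. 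Thus $G(c)-G(d)=M\neq0$, and since a Gram matrix determines such a configuration exactly up to an element of $O(2)$ — which is either a rotation (multiplication by some $z\in\T$) or a reflection (conjugation composed with a rotation) — the relation $G(c)\neq G(d)$ means precisely that $d$ equals neither $zc$ nor $z\bar c$ for any $z\in\T$; equivalently, the local problem on $(0,1)$ is not conjugate phase retrievable (cf.\ Lemma~4.11 of \cite{CCS22} and the $\C^{3}$ reformulation in \cite{Lai20}).

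\emph{Step 2: lifting the obstruction to the line.} On any interval $(j,j+1)$ one has $f(j+y)=(c_{j-2},c_{j-1},c_{j})\cdot\Phi(y)$, so two functions of $\mathcal S(\phi)$ have equal magnitudes on all of $\R$ exactly when, for every $j$, the difference of the Gram matrices of their coefficient triples $(c_{j-2},c_{j-1},c_{j})$ lies in $W_{\phi,(0,1)}^{\perp}$. I would place the triple $c$ on three consecutive indices of $f$ and $d$ on the same indices of $g$, and surround this core by a common real-valued collar so that $f$ and $g$ share all remaining coefficients. Then $|f|=|g|$ automatically away from the core, and on the core interval because $G(c)-G(d)=M\in W^{\perp}$; it remains to handle the (at most four) transition intervals whose active indices mix core and collar, and to keep \eqref{f.real.cond} and \eqref{global.assump} valid for $f$. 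The freedoms available are: the factorization $M=G(c)-G(d)$ is unique only up to independent $O(2)$ actions on $c$ and on $d$; the collar values are arbitrary nonzero reals; and isolated zeros may be inserted in the collar. I expect the transition-interval matching to be the main obstacle, to be carried out by a short case analysis exploiting the low dimension $L=3$ and the structure of $W^{\perp}$ (in particular that it contains no definite matrix); the degenerate case in which $\phi$ already fails local linear independence on $(0,1)$ (so $ww^{T}\in W^{\perp}$ for some $w\neq0$, e.g.\ giving a genuinely complex counterexample with a single complex coefficient inside a real collar) illustrates the mechanism.

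\emph{Step 3: conclusion.} The function $f$ satisfies \eqref{f.real.cond} and \eqref{global.assump}, so by hypothesis it is determined by $|f(x)|$, $x\in\R$, up to a unimodular constant and conjugation. But $g$ has the same magnitudes and, because the Gram matrices of the coefficient triples of $f$ and $g$ on the core interval differ, $g$ equals neither $zf$ nor $z\bar f$ for any $z\in\T$. This contradiction forces $W_{\phi,[0,1]}=\mathbb H_3$.
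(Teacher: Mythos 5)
Your Step 1 is sound: a nonzero $M\in W_{\phi,(0,1)}^{\perp}$ cannot be definite, its spectral parts have rank at most $2$, and writing $M=G(c)-G(d)$ with $c,d\in\C^3$ not related by a unimodular constant or conjugation correctly exhibits a failure of \emph{local} conjugate phase retrieval on a single interval; this is exactly the finite-dimensional equivalence (Lemma 4.11 of \cite{CCS22}, the $\C^3$ formulation of \cite{Lai20}) that the paper itself invokes, and the paper's justification of the corollary goes through that citation rather than through an explicit construction. The problem is that the corollary's hypothesis concerns determination from $|f(x)|$ for \emph{all} $x\in\R$ of functions satisfying \eqref{f.real.cond} and \eqref{global.assump}, so your contrapositive needs a genuine global pair $f,g$ with equal magnitudes on every interval $I_j$, and that is precisely the part (your Step 2) you do not carry out: you state yourself that the ``transition-interval matching'' is the main obstacle and leave it to an unspecified case analysis. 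This is not a routine verification. On a transition interval the coefficient triples of $f$ and $g$ share the collar entries, so the difference of their Gram matrices has a constrained support pattern and must lie in $W_{\phi,(0,1)}^{\perp}$, which in the nondegenerate case $\dim W_{\phi,(0,1)}=5$ is just the line spanned by $M$; nothing guarantees that the particular $M$ you started from, or any $O(2)\times O(2)$ adjustment of the factorization $M=G(c)-G(d)$, makes those differences zero or proportional to $M$. Trying to decouple core from collar by inserting zero coefficients is blocked by \eqref{f.real.cond} (for $L=3$ no two consecutive interior zeros are allowed), and \eqref{global.assump} forces $\Im(c_{k-1}\overline{c_k})\ne 0$ for every $k$ in the middle block, a condition on consecutive entries of the core triple that the spectral factorization of $M$ need not deliver. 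Finally, the degenerate cases ($M$ semidefinite, e.g.\ $M=ww^T$ arising from failure of local linear independence, which forces $d=0$ so that $g$ has a vanishing core and the function $f$ may be real there) require a separate argument that you only gesture at.

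So as written the proposal does not prove the corollary: the local obstruction is established, but the passage from ``$W_{\phi,[0,1]}\subsetneq\mathbb H_3$'' to ``some $f$ satisfying \eqref{f.real.cond} and \eqref{global.assump} is not determined by $|f|$ on all of $\R$'' is exactly the missing content, and it cannot be waved through, since it is where conditions \eqref{f.real.cond}--\eqref{global.assump}, the structure of $W_{\phi,(0,1)}^{\perp}$, and the collar construction interact. Either complete that construction case by case, or follow the paper's route and reduce the statement to the equivalent characterization of conjugate phase retrievable families of real vectors in $\C^3$ from \cite{Lai20}, arguing additionally how global retrievability of functions satisfying \eqref{f.real.cond}--\eqref{global.assump} forces local retrievability of arbitrary coefficient triples.
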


\begin{example}\label{phi1.ex}
{\rm Let \begin{equation*}\label{phi_0.def}
\phi_1(x)=\left\{\begin{array}{ll}
x^3/2&{\rm if\ }0\le x<1\\
-x^3+3x^2-2x+1/2&{\rm if\ }1\le x<2\\
x^3/2-3x^2+5x-3/2&{\rm if\ }2\le x<3\\
0&{\rm otherwise}.
\end{array}\right.
\end{equation*}  The function $\phi_1$ has local linear independence on the interval $(0,1)$ and
 $W_{\phi_1,(0,1)}=\mathbb H_3$\cite{CJS19}. We can find a finite set $\Gamma\subset(0,1)$  so  that the corresponding outer products $\Phi_{(0,1)}(\gamma)(\Phi_{(0,1)}(\gamma))^T$, $\gamma\in \Gamma$, form a basis for  $\mathbb H_3$. By Theorem \ref{sufficiency.thm}, any function $f\in V(\phi_1)$ satisfying \eqref{f.real.cond} and \eqref{global.assump}
can be determined, up to a unimodular constant and conjugation,  from the phaseless samples $|f(\gamma)|, \gamma\in\Gamma+\Z$.}
\end{example}
Next we provide a counterexample that  the space $\mathcal S(\phi)$ is not locally conjugate phase retrievable on the intervals $I_j=(j,j+1), j\in\Z$,  when the generator $\phi$ is supported on $[0,3]$ and $W_{\phi, (0,1)}\subsetneq\mathbb H_3$.
\begin{example}\label{b3.ex}
{\rm Let $B_3$  be the B-spline of order $3$ and write
 \begin{equation*}\label{B3.def}
 B_3(x)=
\begin{cases}
\frac{1}{2}x^2 &{\rm\  if\ }0\le x<1\\
-x^2+3x-\frac{3}{2} &{\rm\  if\ }1\le x<2\\
\frac{1}{2}x^2-3x+\frac{9}{2} &{\rm\  if\ }2\le x<3\\
0 &{\rm\ otherwise}.
\end{cases}
 \end{equation*}
 Set ${\bf B}_{(0,1)}(x)=(B_3(x),B_3(x+1),B_3(x+2))^T,\ 0< x<1$. Then we have
 \begin{equation*}
 {\bf B}_{(0,1)}(x)=\frac{1}{2}\left(
\begin{array}{cccc}
 0\\
 1\\
 1
\end{array}
\right )+\left(
\begin{array}{cccc}
 0\\
 1\\
 -1
\end{array}
\right )x+\frac{1}{2}\left(
\begin{array}{cccc}
 1\\
 -2\\
 1
\end{array}
\right )x^2
 \end{equation*}
 and
 \begin{equation}\label{B3}
 \begin{aligned}
 {\bf B}_{(0,1)}(x){\bf B}_{(0,1)}(x)^T&=\frac{1}{4}\left(
\begin{array}{cccc}
 0& 0& 0\\
 0& 1& 1\\
 0& 1& 1
\end{array}
\right )+\left(
\begin{array}{cccc}
 0& 0& 0\\
 0& 1& 0\\
 0& 0& -1
\end{array}
\right )x+\frac{1}{4}\left(
\begin{array}{cccc}
 0& 1& 1\\
 1& 0& -5\\
 1& -5& 6
\end{array}
\right )x^2\\
&+\frac{1}{2}\left(
\begin{array}{cccc}
 0& 1& -1\\
 1& -4& 3\\
 -1& 3& -2
\end{array}
\right )x^3+\frac{1}{4}\left(
\begin{array}{cccc}
 1& -2& 1\\
 -2& 4& -2\\
 1& -2& 1
\end{array}
\right )x^4.
\end{aligned}
 \end{equation}
The space $W_{B_3,(0,1)}$ spanned by the outer products ${\bf B}_{(0,1)}(x){\bf B}_{(0,1)}(x)^T, 0<x<1,$ is a 5-dimensional linear subspace of $\mathbb H_3$. While a function $f\in \mathcal B_3$ restricted on an interval $I_j=(j,j+1), j\in\Z,$ is  a polynomial of degree $2$, it may not be locally conjugate phase retrievable on $I_j$ if all the roots of $f|I_j$ are not real.}
\end{example}
\subsection{Conjugate phaseless Hermite sampling and reconstruction in $\mathcal S(\phi)$}\label{hermite.sec}
 Hermite sampling is to interpolate by the samples of the function and its first derivative. For the B-spline $B_3$ of order $3$, we have the derivative
\begin{equation*}
  B'_3(x)=
\begin{cases}
x &{\rm\  if\ }0\le x<1\\
-2x+3 &{\rm\  if\ }1\le x<2\\
x-3 &{\rm\  if\ }2\le x<3\\
0 &{\rm\ elsewhere},
\end{cases}
 \end{equation*} and for all $x\in (0,1)$,
 \begin{equation*}
 {\bf B'}_{(0,1)}(x):=(B'_3(x),B'_3(x+1),B'_3(x+2))^T=\left(
\begin{array}{cccc}
 0\\
 1\\
 -1
\end{array}
\right )+\left(
\begin{array}{cccc}
 1\\
 -2\\
 1
\end{array}
\right )x
 \end{equation*}
 and
 \begin{equation*}\label{B3'}
 {\bf B}'_{(0,1)}(x){\bf B}'_{(0,1)}(x)^T=\left(
\begin{array}{cccc}
 0& 0& 0\\
 0& 1& -1\\
 0& -1& 1
\end{array}
\right )+\left(
\begin{array}{cccc}
 0& 1& -1\\
 1& -4& 3\\
 -1& 3& -2
\end{array}
\right )x+\left(
\begin{array}{cccc}
 1& -2& 1\\
 -2& 4& -2\\
 1& -2& 1
\end{array}
\right )x^2.
 \end{equation*}
 This together with  \eqref{B3} implies that \begin{equation*}W_{B_3,(0,1)}\cup W_{B'_3,(0,1)}=\mathbb H_3,\end{equation*} and  there exist two  subsets $\Gamma, \Gamma'\subset (0,1)$ such that the set \begin{equation*}\{{\bf B}_{(0,1)}(\gamma){\bf B}_{(0,1)}(\gamma)^T,\gamma\in\Gamma\}\cup\{{\bf B}'_{(0,1)}(\gamma'){\bf B}'_{(0,1)}(\gamma')^T,\gamma'\in\Gamma'\} \end{equation*}forms a basis of $\mathbb H_3$.
 Following a similar argument as in Theorem \ref{sufficiency.thm}, we have $\mathcal B_3$ can be local conjugate phase retrieval on $I_j=(j,j+1), j\in\Z,$ from  Hermite samples, and hence any function $f\in \mathcal B_3$ satisfying \eqref{f.real.cond} and \eqref{global.assump} can be determined, up to a unimodular constant and conjugation, from the phaseless Hermite samples $|f(\gamma)|, \gamma\in\Gamma+\Z,$ and $|f'(\gamma')|,\gamma'\in\Gamma'+\Z$.

For a general real-valued differentiable generator $\phi$ with compact support, by  a similar argument as in  Theorem \ref{sufficiency.thm}, we have the following result on the conjugate phaseless Hermite sampling and reconstruction  in $\mathcal S(\phi)$.

\begin{theorem}\label{hermite.thm}
{\rm Let $\phi$ be a real-valued differentiable function satisfying \eqref{supportlength.def2} and \eqref{L.assump}.
Assume that the functions $\phi$ and $\phi'$ have local linear independence on the interval $(0,1)$ and $W_{\phi,(0,1)}\cup W_{\phi',(0,1)}={\mathbb H}_L$. Then there exist finite sets $\Gamma,\Gamma'\subset (0,1)$ such that any function $f\in \mathcal S(\phi)$ satisfying \eqref{f.real.cond} and \eqref{global.assump} can be determined, up to a unimodular constant and conjugation, from the phaseless Hermite samples $|f(\gamma)|$, $\gamma\in\Gamma+\Z$, and $|f'(\gamma')|$, $\gamma'\in\Gamma'+\Z$.}
\end{theorem}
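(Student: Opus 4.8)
The plan is to run the argument of Theorem~\ref{sufficiency.thm}, replacing the single family of rank-one matrices coming from point evaluations of $\phi$ by the mixed family coming from point evaluations of $\phi$ \emph{and} of $\phi'$. First I would reduce the global statement to a local one, exactly as in the proof of Theorem~\ref{global.thm}: that proof uses local conjugate phase retrievability on each interval $I_j=(j,j+1)$ only through the conclusion that the coefficient vectors attached to $I_j$ (and their real analogues) are recovered up to a common orthogonal matrix in $\R^{2\times2}$ from the phaseless data on $I_j$, after which the propagation argument driven by \eqref{f.real.cond}, \eqref{global.assump} and Lemmas~\ref{real.pr.lem}--\ref{cpr.lem} stitches them together. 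Hence it suffices to produce finite sets $\Gamma,\Gamma'\subset(0,1)$ such that, for every $j\in\Z$ and every $f=\sum_{k\in\Z}c_k\phi(\cdot-k)\in\mathcal{S}(\phi)$, the Hermite samples $|f(\gamma+j)|$, $\gamma\in\Gamma$, and $|f'(\gamma'+j)|$, $\gamma'\in\Gamma'$, determine the Gram matrix $\big(\Re(c_n\overline{c_{n'}})\big)_{j-L+1\le n,n'\le j}$, equivalently (Lemma~4.11 of \cite{CCS22}) determine $\mathcal{A}c_{j-L+1},\dots,\mathcal{A}c_{j}$ up to an orthogonal matrix in $\R^{2\times2}$; the rest of the proof of Theorem~\ref{global.thm} then transfers verbatim with the Hermite samples on $(\Gamma+k)\cup(\Gamma'+k)$ in the role of the point evaluations on $I_k$. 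Note that the local linear independence of $\phi$ on $(0,1)$ ensures $\#K_{(0,1)}=L$, so $\mathbb{H}_L$ is the correct ambient space.

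Next I would choose the sampling points. The matrix-valued maps $x\mapsto\Phi_{(0,1)}(x)\Phi_{(0,1)}(x)^T$ and $x\mapsto\Phi'_{(0,1)}(x)\big(\Phi'_{(0,1)}(x)\big)^T$ on $(0,1)$ have ranges spanning $W_{\phi,(0,1)}$ and $W_{\phi',(0,1)}$ respectively, and a spanning family of matrices contains a basis of its span; so I can pick $\Gamma=\{\gamma_1,\dots\}\subset(0,1)$ with $\{\Phi_{(0,1)}(\gamma_l)\Phi_{(0,1)}(\gamma_l)^T:\gamma_l\in\Gamma\}$ a basis of $W_{\phi,(0,1)}$, and then, using the hypothesis $W_{\phi,(0,1)}\cup W_{\phi',(0,1)}=\mathbb{H}_L$ (i.e., these two subspaces jointly span $\mathbb{H}_L$), pick $\Gamma'=\{\gamma'_1,\dots\}\subset(0,1)$ so that
\[
\big\{\Phi_{(0,1)}(\gamma_l)\Phi_{(0,1)}(\gamma_l)^T:\gamma_l\in\Gamma\big\}\cup\big\{\Phi'_{(0,1)}(\gamma'_m)\big(\Phi'_{(0,1)}(\gamma'_m)\big)^T:\gamma'_m\in\Gamma'\big\}
\]
is a basis of $\mathbb{H}_L$, with $\#\Gamma+\#\Gamma'=L(L+1)/2$. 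This choice does not depend on $j$.

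Finally, for such $\Gamma,\Gamma'$, any $f\in\mathcal{S}(\phi)$, and any $j\in\Z$, since $\phi$ and $\phi'$ are real-valued the imaginary parts cancel, giving
\[
|f(\gamma_l+j)|^2=\sum_{j-L+1\le n,n'\le j}\Re\big(c_n\overline{c_{n'}}\big)\,\phi(\gamma_l+j-n)\phi(\gamma_l+j-n'),
\]
and likewise $|f'(\gamma'_m+j)|^2=\sum_{j-L+1\le n,n'\le j}\Re(c_n\overline{c_{n'}})\,\phi'(\gamma'_m+j-n)\phi'(\gamma'_m+j-n')$. Each of these is a linear functional of the Gram matrix $\big(\Re(c_n\overline{c_{n'}})\big)_{j-L+1\le n,n'\le j}$, namely its pairing with the corresponding basis matrix chosen above; writing these pairings in the standard basis $\{\mathbf{F}_n\}$ of $\mathbb{H}_L$ as in the proof of Theorem~\ref{sufficiency.thm}, the coefficient matrix of the resulting linear system in the unknowns $\Re(c_n\overline{c_{n'}})$, $j-L+1\le n\le n'\le j$, is a change of basis matrix between two bases of $\mathbb{H}_L$, hence nonsingular. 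Its inverse recovers the Gram matrix, which is exactly the local input required in the first paragraph; running the propagation argument of Theorem~\ref{global.thm} with these data then shows that every $f\in\mathcal{S}(\phi)$ satisfying \eqref{f.real.cond} and \eqref{global.assump} is determined, up to a unimodular constant and conjugation, from $|f(\gamma)|$, $\gamma\in\Gamma+\Z$, and $|f'(\gamma')|$, $\gamma'\in\Gamma'+\Z$.

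I expect the only genuinely new ingredient, compared with Theorem~\ref{sufficiency.thm}, to be the choice of $\Gamma$ and $\Gamma'$ making the \emph{mixed} rank-one family a basis of $\mathbb{H}_L$; this is where the joint spanning hypothesis and the local linear independence of both $\phi$ and $\phi'$ on $(0,1)$ are used, and it rests only on the elementary fact that a spanning family of vectors in a finite-dimensional space contains a basis. Checking that the propagation argument of Theorem~\ref{global.thm} is unaffected by replacing the point evaluations on each $I_j$ with the Hermite samples on $(\Gamma+j)\cup(\Gamma'+j)$ is then a routine inspection of that proof.
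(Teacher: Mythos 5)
Your proposal is correct and follows essentially the same route as the paper, which simply declares the proof ``similar to that of Theorem~\ref{sufficiency.thm}'': reduce to local conjugate phase retrieval on the intervals $I_j$ via Theorem~\ref{global.thm}, choose $\Gamma,\Gamma'$ so that the mixed rank-one family $\{\Phi_{(0,1)}(\gamma)\Phi_{(0,1)}(\gamma)^T\}\cup\{\Phi'_{(0,1)}(\gamma')\Phi'_{(0,1)}(\gamma')^T\}$ forms a basis of $\mathbb H_L$ (using the joint spanning hypothesis), recover the Gram matrix $\big(\Re(c_n\overline{c_{n'}})\big)$ by inverting the resulting nonsingular linear system, and invoke Lemma 4.11 of \cite{CCS22} before propagating. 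Your write-up in fact supplies the details the paper omits, so no further changes are needed.
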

\begin{proof}The proof is similar to that of Theorem \ref{sufficiency.thm}, we omit it here.\end{proof}
\section{Conjugate phaseless Hermite sampling and reconstruction  in a complex spline space}\label{cps.b}
Let $\mathcal B_N$ be the spline space as in \eqref{spline.sis.def} with $N\ge 3$.
For any function $f\in \mathcal B_N$ and $j\in\Z$, the restriction $f|_{I_j}$ on the interval $I_j=(j,j+1)$ is a polynomial of order $N-1$, which may not be conjugate phase retrieval from the phaseless observations $|f(x)|, x\in I_j$. Similar to the conjugate phaseless Hermite sampling and reconstruction in $\mathcal B_3$, we consider  the conjugate phase retrieval in $\mathcal B_N$ from the Hermite measurements for general $N\ge 3$ in this section.

\subsection{Conjugate phaseless Hermite sampling and reconstruction in $\mathcal B_N$}
\begin{theorem}\label{hermite.spline.thm}
{\rm Let $\mathcal B_N$ be the spline space as in \eqref{spline.sis.def} with $N\ge 3$ and
 the sets $\Gamma,\Gamma'$ contain $2N-1$ and $2N-5$ distinct points in $(0,1)$ respectively. Any function $f\in \mathcal B_N$ satisfying \eqref{f.real.cond} and \eqref{global.assump} can be determined, up to a unimodular constant and conjugation, from the phaseless Hermite samples $|f(\gamma)|, \gamma\in\Gamma+\Z$, and $|f'(\gamma')|,\gamma'\in\Gamma'+\Z$.}
\end{theorem}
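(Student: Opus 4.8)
The plan is to reduce the statement to local conjugate phase retrieval on each interval $I_j=(j,j+1)$ from the prescribed Hermite samples, and then to solve the one-interval problem by using that on $I_j$ a function of $\mathcal B_N$ is merely a complex polynomial of degree at most $N-1$.

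Since $\mathcal B_N$ with $N\ge 3$ is \emph{not} locally conjugate phase retrievable on $I_j$ from point evaluations (Example \ref{b3.ex}), Theorem \ref{global.thm} cannot be invoked verbatim, so I would first record its Hermite analogue: if $\mathcal B_N$ is locally conjugate phase retrievable on every $I_j$ from $|f(\gamma+j)|,\gamma\in\Gamma$, and $|f'(\gamma'+j)|,\gamma'\in\Gamma'$, then every $f\in\mathcal B_N$ obeying \eqref{f.real.cond} and \eqref{global.assump} is determined, up to a unimodular constant and conjugation, by $|f(\gamma)|,\gamma\in\Gamma+\Z$, and $|f'(\gamma')|,\gamma'\in\Gamma'+\Z$. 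Only the sufficiency half of Theorem \ref{global.thm} is needed, and its proof transfers word for word: on each $I_j$ the vectors $\mathcal A c_{j-N+1},\dots,\mathcal A c_j$ are recovered up to a common orthogonal matrix $\mathbf U_j$, the normalization $\mathbf U_{\kappa_-+1}=\mathbf I_2$ is imposed, and the induction on $j$ propagates $\mathbf U_j$ using \eqref{global.assump} exactly as before; on the pieces where $f$ is real-valued up to a unimodular constant the derivative samples are not even required, since the modulus of a real polynomial of degree $\le N-1$ on an interval determines it up to sign, and \eqref{f.real.cond} with Lemma \ref{real.pr.lem} carries the sign across intervals. Hence it suffices to produce $\Gamma$ of $2N-1$ and $\Gamma'$ of $2N-5$ distinct points in $(0,1)$ for which $\mathcal B_N$ is locally conjugate phase retrievable on $(0,1)$.

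For the one-interval problem write $f|_{(0,1)}(x)=\sum_{m=0}^{N-1}a_mx^m$ with $a_m\in\C$ (a real-linear reparametrization of the B-spline coefficients). By Lemma 4.11 of \cite{CCS22}, recovering $f|_{(0,1)}$ up to a unimodular constant and conjugation is the same as recovering the Gram matrix $G=\big(\Re(a_m\overline{a_{m'}})\big)_{0\le m,m'\le N-1}\in\mathbb H_N$, which is symmetric, positive semidefinite and of rank at most $2$ (indeed $G=\bm\alpha\bm\alpha^{T}+\bm\beta\bm\beta^{T}$ with $\bm\alpha=\Re\mathbf a$, $\bm\beta=\Im\mathbf a$). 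Put $T(M)(x)=\sum_{m,m'}M_{mm'}x^{m+m'}$ and $S(M)(x)=\sum_{m,m'}mm'M_{mm'}x^{m+m'-2}$, so that $|f(x)|^2=T(G)(x)$ has degree $\le 2N-2$ and $|f'(x)|^2=S(G)(x)$ has degree $\le 2N-4$. The $2N-1$ distinct values $|f(\gamma)|^2$, $\gamma\in\Gamma$, recover $T(G)$ by Lagrange interpolation, i.e. recover $G$ modulo $\ker T=\{M\in\mathbb H_N:\text{every anti-diagonal sum of }M\text{ vanishes}\}$, with $\dim\ker T=\binom{N-1}{2}$. The variation of $S(G)$ over the coset $G+\ker T$ is the fixed space $S(\ker T)$, and an elementary anti-diagonal count — on each anti-diagonal the "plain sum" and the "weighted sum with weights $m(k-m)$" are independent functionals once there are at least two free entries — gives $\dim(\ker T\cap\ker S)=\binom{N-3}{2}$, hence $\dim S(\ker T)=2N-5$. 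Choosing $\Gamma'$ so that evaluation at its $2N-5$ points is a bijection on the $(2N-5)$-dimensional polynomial space $S(\ker T)$ (which rules out only a proper algebraic subset of $(0,1)^{2N-5}$), the values $|f'(\gamma')|^2$, $\gamma'\in\Gamma'$, recover $S(G)$ exactly. Thus $T(G)$ and $S(G)$ are both known, i.e. $G$ is determined modulo $Z:=\ker T\cap\ker S$.

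Finally, $Z=\{0\}$ when $N\in\{3,4\}$, so the proof is complete there; for $N\ge 5$ the residual ambiguity is removed by the rank constraint. If $G_1,G_2$ are symmetric positive semidefinite of rank $\le 2$ and produce the same samples, then $G_1-G_2\in Z$ and $\operatorname{rank}(G_1-G_2)\le 4$, so it suffices to show that every nonzero element of $Z$ has rank $\ge 5$. Here one uses the block description $Z=\bigoplus_k Z_k$, $Z_k$ being the symmetric matrices supported on the $k$-th anti-diagonal with vanishing plain and weighted sums: a nonzero element of $Z_k$ must have at least three nonzero entries on that anti-diagonal (two being impossible, since the weights $m(k-m)$ are pairwise distinct), and a symmetric matrix with three nonzero anti-diagonal entries has rank at least $5$; this bound must then be shown to persist under superposition across distinct anti-diagonals — equivalently, that the fixed subspace $Z$, of dimension $\binom{N-3}{2}$, meets the rank-$\le 4$ determinantal variety (of the same codimension in $\mathbb H_N$) only at the origin. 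I expect exactly this last point, the anti-diagonal bookkeeping establishing $Z\cap\{\operatorname{rank}\le 4\}=\{0\}$ for all $N\ge 5$, to be the main obstacle; the remainder is interpolation, the genericity of $\Gamma$ and $\Gamma'$, and a replay of the proof of Theorem \ref{global.thm}.
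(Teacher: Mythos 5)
Your global step is sound and is essentially the paper's: the sufficiency half of Theorem \ref{global.thm} only uses that on each $I_j$ the coefficient vector is recovered up to a common orthogonal transform, so once each polynomial piece is locally conjugate phase retrievable from the Hermite samples, conditions \eqref{f.real.cond} and \eqref{global.assump} sew the pieces together exactly as in that proof. Your Gram-matrix reformulation of the one-interval problem is also correct, and so are the dimension counts: the samples determine $G$ modulo $Z=\ker T\cap\ker S$, with $\dim Z=\binom{N-3}{2}$. One minor repair: no genericity of $\Gamma'$ is needed, whereas the theorem asserts the conclusion for \emph{arbitrary} distinct points. Since the two top anti-diagonal sums of any $M\in\ker T$ vanish, $S(\ker T)$ lies in the polynomials of degree at most $2N-6$ and, having dimension $2N-5$, equals that space; hence evaluation at any $2N-5$ distinct points is injective on it. This degree drop is exactly the paper's identity \eqref{pf.eq.1}; as written, your argument only covers generic $\Gamma'$.

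The genuine gap is the step you yourself flag: for $N\ge 5$ you must show that no nonzero element of $Z$ can be written as $\Re({\bf x}{\bf x}^*)-\Re({\bf y}{\bf y}^*)$, and you offer only the single-anti-diagonal rank computation together with the hope that the bound ``rank at least $5$'' persists under superposition of anti-diagonals. That is not a proof, and the relaxation to ``rank at most $4$'' may not even be the right invariant, since a difference of two positive semidefinite rank-$\le 2$ matrices also has at most two positive and two negative eigenvalues, structure your determinantal-variety formulation discards. This missing injectivity is precisely the content of the paper's Lemma \ref{discrete.hermite.lem}: there one works directly with the vectors rather than with $Z$, equates the autocorrelation sums $A_m({\bf x})=A_m({\bf y})$ and $A_m({\bf x}')=A_m({\bf y}')$ (obtained by interpolation, with \eqref{pf.eq.1} covering the two missing derivative coefficients), and recovers ${\bf y}$ from ${\bf x}$ entry by entry: first real parts and moduli through $2\times 2$ linear systems such as \eqref{pf.eq.system}, then, once a genuinely complex entry appears, the imaginary parts through \eqref{pf.eq.system.3} and Proposition \ref{unique.prop}, concluding ${\bf x}={\bf y}$ or ${\bf x}=\overline{\bf y}$, with Lemma \ref{discrete.lem} supplying the frame-theoretic reduction. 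Until you supply an argument of comparable strength for all $N\ge 5$, your proof is complete only for $N=3,4$, where $Z=\{0\}$.
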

We remark that  the sampling density for the conjugate phase retrieval in the above theorem is $4N-6\le\frac{N(N+1)}{2}$ for $N\ge 3$, cf. a generic conjugate phase retrievable frame of $4N-6$ real vectors for $\C^N$ \cite{Lai20}.

To prove Theorem \ref{hermite.spline.thm}, we need the following lemma on the conjugate phase retrieval in $\C^N$, see an equivalent statement of conjugate phase retrieval for polynomials in \cite{McDonald04}. Different with  the complex methods for the entire functions used in \cite{McDonald04}, at the end of this subsection we provide an alternative proof based on the characterization of  conjugate phase retrievable frames in \cite{CCS22}.

 \begin{lemma}\label{discrete.hermite.lem}
 {\rm Let  $\Gamma=\{\gamma_n, 1\le n\le2N-1\}\subset \R $ and $\Gamma'=\{\gamma'_n, 1\le n\le 2N-5\}\subset\R$, and the $N$-dimensional real vectors $\psi_{n}=(1, \gamma_n, \gamma_n^2,\ldots, \gamma_n^{N-1})^T, 1\le n\le 2N-1,$ and $\tilde\psi_n=(0, 1, 2\gamma'_n, \ldots, (N-1)(\gamma'_n)^{N-2})^T, 1\le n\le 2N-5$. Then the set \begin{equation}\label{real.frame.eq}\Psi=\{\psi_n, 1\le n\le 2N-1\}\cup\{\tilde\psi_{n}, 1\le n\le2N-5\}\end{equation} does conjugate phase retrieval in $\C^N$.}
\end{lemma}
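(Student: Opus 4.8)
The plan is to reduce the conjugate phase retrieval property of the frame $\Psi$ to the real phase retrieval property of an associated real frame, using the characterization of conjugate phase retrievable frames from \cite{CCS22}. Recall that a real frame $\{v_n\}$ does conjugate phase retrieval in $\C^N$ if and only if for every nonzero $w\in\C^N$ and every pair of real numbers $a,b$ (not both zero), the vectors $av_n\Re\langle\cdot\rangle$... more precisely, the relevant reformulation is: $\Psi$ does conjugate phase retrieval in $\C^N$ iff for every $\mathbf{x}\in\R^N$, the real vectors $\{v_n\}$ allow reconstruction of the Gram-type data $\langle v_n,\mathbf{x}\rangle$ up to sign patterns that are globally consistent, equivalently iff for every two-dimensional real subspace $V\subset\R^N$ the projections $\{P_V v_n\}$ do phase retrieval in $\R^2$ (this is essentially Lemma~4.11 of \cite{CCS22} applied in the discrete setting), together with the complement property in $\R^N$ itself. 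So the first step is to state precisely this equivalent condition and then verify it for $\Psi$.

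First I would verify the plain \emph{complement property} of $\Psi$ in $\R^N$: every subset of $\Psi$ of size $2N-1$ or its complement must span $\R^N$. Since the $\psi_n$ alone are Vandermonde vectors at $2N-1$ distinct nodes, any $N$ of them are linearly independent; removing any $N-1$ vectors from all of $\Psi$ (which has $4N-6$ vectors) still leaves at least $N-1$ Vandermonde vectors plus enough derivative vectors, and a short argument using the fact that confluent Vandermonde systems are nonsingular shows that any $N$ vectors among $\{\psi_n\}\cup\{\tilde\psi_n\}$ that correspond to distinct nodes (with at most double multiplicity, at most $2N-5$ of which are ``derivative'' slots) are independent — this is exactly the statement that Hermite interpolation with at most double nodes is solvable. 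The key point is that the $2N-5$ derivative vectors $\tilde\psi_n$ can be ``paired'' with $2N-5$ of the $\psi_n$ at the same nodes to form confluent blocks, so the whole configuration has the structure of a Hermite interpolation problem, whose nonsingularity is classical.

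Next I would handle the genuinely two-dimensional obstruction: for every $2$-plane $V\subset\R^N$, the projected family $\{P_V\psi_n\}\cup\{P_V\tilde\psi_n\}$ must do phase retrieval in $\R^2$, i.e.\ satisfy the complement property there, which amounts to: one cannot partition $\Psi$ into two subsets each lying in a single $1$-dimensional subspace contained in an affine... no — it means for each hyperplane-type splitting, one of the two halves fails to be contained in a line through the origin after projection. Concretely, $P_V v = 0$ iff $v\perp V$, and $v\in V^\perp$ is an $(N-2)$-dimensional condition; if fewer than $2N-5$ of the $v_n$ lie in any fixed hyperplane... The cleanest route: phase retrieval in $\R^2$ for $\{P_V v_n\}$ fails iff there is a line $\ell\subset\R^2$ with $|\{n: P_V v_n\in\ell\}|\ge 4N-6-1$... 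The precise counting I will carry out is that any nonzero linear functional vanishes on at most $N-1$ of the $\psi_n$ and at most $N-2$ of the $\tilde\psi_n$ (a polynomial of degree $\le N-1$ has $\le N-1$ roots; its derivative has $\le N-2$), hence on at most $2N-3$ vectors of $\Psi$; so any two distinct lines in $\R^2$ capture at most $2(2N-3)=4N-6$ vectors but with the origin double-counted — a sharper bound is needed.

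\textbf{The main obstacle} will be this last step: pinning down exactly how many vectors of $\Psi$ a pair of real linear functionals (spanning $V^\perp$'s complement, i.e.\ a generic element of a $2$-plane's worth of functionals) can simultaneously annihilate or collinearize, and showing $4N-6$ and the specific split $2N-1$ / $2N-5$ is tight enough that no bad partition exists. I expect to argue by contradiction: a failure of phase retrieval in $\R^2$ for some projection gives two polynomials $p,q$ of degree $\le N-1$, with the samples of $f=p+iq$ (and $f'$) split so that on one part $q$-values vanish and on the other $p$-values vanish — but $q$ has $\le N-1$ roots among the $\gamma_n$ and $q'$ has $\le N-2$ roots among the $\gamma'_n$, forcing one part to have at most $(N-1)+(N-2)=2N-3<2N-1$ function-sample nodes, which contradicts the pigeonhole distribution of the $2N-1$ nodes in $\Gamma$ and $2N-5$ nodes in $\Gamma'$ between the two parts of the partition. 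Assembling these counting inequalities carefully, and invoking the characterization from \cite{CCS22} to conclude that the complement property in $\R^N$ plus phase retrieval of all $\R^2$-projections yields conjugate phase retrieval in $\C^N$, completes the proof.
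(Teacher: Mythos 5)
Your argument collapses at the very first step: the ``characterization'' you invoke --- that a real frame does conjugate phase retrieval in $\C^N$ if and only if it has the complement property in $\R^N$ and every projection onto a two--dimensional subspace $V\subset\R^N$ does phase retrieval in $\R^2$ --- is not Lemma~4.11 of \cite{CCS22} (that lemma concerns recovering a finite family of vectors in $\R^2$, up to a common orthogonal matrix, from their Gram matrix), and the sufficiency direction is simply false. A concrete counterexample sits inside your own setting: take only the $2N-1$ Vandermonde vectors $\psi_n$ (drop the $\tilde\psi_n$). Any nonzero coefficient vector $u$ gives a nonzero polynomial of degree at most $N-1$, which vanishes at no more than $N-1$ of the $2N-1$ nodes, so these vectors have the complement property in $\R^N$ and every $2$--plane projection does phase retrieval in $\R^2$ by exactly the counting you sketch. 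Yet they do not do conjugate phase retrieval in $\C^N$ for $N\ge 3$: the samples $|p(\gamma_n)|^2$ determine the degree-$(2N-2)$ polynomial $|p(x)|^2$ on all of $\R$, and flipping one complex root across the real axis (e.g.\ $p_1(x)=(x-i)(x-2i)$ versus $p_2(x)=(x-i)(x+2i)$ for $N=3$) preserves $|p(x)|$ on $\R$ while $p_2$ is neither a unimodular multiple of $p_1$ nor of $\overline{p_1}$. The paper's Example~3.5 makes the same point structurally: for $L=3$ one needs the outer products of the measurement vectors to span the six-dimensional space $\mathbb H_3$, a condition your two hypotheses cannot see (they are already satisfied by five generic vectors). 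So the intersection-of-conditions you plan to verify, even if verified, would not yield the lemma; and your final counting paragraph, which you yourself flag as needing ``a sharper bound,'' only addresses obstructions living in a single $2$--plane, whereas the genuine obstructions $\Re({\bf x}{\bf x}^*-{\bf y}{\bf y}^*)$ have rank up to four.

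The paper's proof is built precisely to handle this: it invokes the criterion of Lemma~4.3 (all matrices in ${\mathcal N}_{\bf A}\cap{\bf M}_{N,2}(\R)$ skew-symmetric), translates the phaseless data into equality of the autocorrelation coefficients $A_m({\bf x})=A_m({\bf y})$, $0\le m\le 2N-2$, and $A_m({\bf x}')=A_m({\bf y}')$ via polynomial interpolation (the identities $(N-1)(N-2)A_{2N-3}({\bf s})=A_{2N-3}({\bf s}')$ and $(N-1)^2A_{2N-2}({\bf s})=A_{2N-2}({\bf s}')$ are what let $2N-5$ derivative nodes suffice), and then runs a coefficient-by-coefficient induction, using Proposition~4.5, to conclude ${\bf x}={\bf y}$ or ${\bf x}=\overline{\bf y}$. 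Note where the derivative samples enter: they supply exactly the extra quadratic information that rules out the conjugate-root-flip ambiguity described above. Your proposal never engages with that mechanism --- the derivative vectors appear only in crude root counts --- which is another sign that the route through complement properties and $\R^2$ projections cannot reach the statement. To repair the proposal you would have to either prove a genuinely stronger characterization of conjugate phase retrievability by real frames (stronger than the two conditions you list) or work directly with the rank-constrained null-space criterion as the paper does.
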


\begin{proof}[Proof of Theorem \ref{hermite.spline.thm}]
By Theorem \ref{global.thm}, it reduces to proving that $\mathcal B_N$  is locally conjugate phase retrievable on the intervals $I_j=(j,j+1), j\in\Z$.

 Observe that for any function $f\in \mathcal B_N$ and $j\in\Z$, the restriction $f|_{I_j}$ on the interval $I_j$ is a polynomial of order $N-1$. By Lemma \ref{discrete.hermite.lem}, the function $f|_{I_j}$ can be determined, up to a unimodular constant and conjugation, from the phaseless Hermite samples $|f(\gamma)|,\gamma\in\Gamma+j$, and $ |f'(\gamma')|, \gamma'\in\Gamma'+j$. This proves our conclusion.
\end{proof}

To prove Lemma \ref{discrete.hermite.lem}, we recall a characterization of conjugate phase retrieval in a complex range space, and provide a proposition on the unique determination of a complex number.
\begin{lemma}{\rm\label{discrete.lem}\cite{CCS22}
Let ${\bf A}^T=({\bf a}_1,\ldots,{\bf a}_M)\subset\R^{M\times N}$ be a matrix of rank $N$, ${\mathcal N}_{\bf A}$ be the set of  real matrices ${\bf X}$ of size $N\times N$ such that ${\rm Tr}( {\bf a}_i {\bf a}_i^T  {\bf X} ) =0$ for  all $1\le i\le M$, and ${\bf M}_{N,2}(\R)$ be the set of all  real matrices of size $N\times N$ with rank at most 2, which can be written as \begin{equation*}
{\bf M}_{N,2}(\R)=\{\Re({\bf xy}^*), {\bf x}, {\bf y}\in\C^{N}\}=\{\Re({\bf xx}^*-{\bf yy}^*)+\Re({\bf yx}^*-{\bf xy}^*), {\bf x}, {\bf y}\in\C^{N}\}.%=\{\Re({\bf xx}^*-{\bf yy}^*)+\Re({\bf yx}^*-{\bf xy}^*), {\bf x}, {\bf y}\in\C^{N}\}.
\end{equation*}
 Then
the complex range space
$R_{\C}({\bf A}):=\{{\bf Ax}, {\bf x}\in\C^N\}$ is conjugate phase retrievable if and only if
all matrices ${\bf X}$ in ${\mathcal N}_{\bf A}\cap {\bf M}_{N,2}(\R)$ are skew-symmetric.
}
\end{lemma}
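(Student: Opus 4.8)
The plan is to translate conjugate phase retrieval of $R_{\C}({\bf A})$ into a statement about the real symmetric matrices seen by the quadratic measurements, and then to match that statement against the null space $\mathcal N_{\bf A}$ using the two parametrizations of ${\bf M}_{N,2}(\R)$ recorded in the statement. First I would record the form of the phaseless measurements: writing $x=p+iq$ with $p,q\in\R^N$, for each real vector ${\bf a}_i$ one has $|{\bf a}_i^T x|^2=({\bf a}_i^T p)^2+({\bf a}_i^T q)^2=\Tr({\bf a}_i{\bf a}_i^T\Re(xx^*))$, since $\Re(xx^*)=pp^T+qq^T$. Because ${\bf A}$ has rank $N$, the space $R_{\C}({\bf A})$ is conjugate phase retrievable precisely when, for all $x,y\in\C^N$, the equalities $|{\bf a}_i^T x|=|{\bf a}_i^T y|$ for $1\le i\le M$ force $y\in\{zx,z\bar x:z\in\T\}$. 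By the displayed trace identity these equalities hold if and only if the real symmetric matrix $W:=\Re(xx^*)-\Re(yy^*)=\Re(xx^*-yy^*)$ lies in $\mathcal N_{\bf A}$.

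The crux is the elementary but central claim that $W=0$ if and only if $y\in\{zx,z\bar x:z\in\T\}$. For the ``if'' direction a one-line computation gives $\Re(yy^*)=\Re(xx^*)$ in both cases $y=zx$ and $y=z\bar x$. For the converse I would pass to the $\R^2$-valued picture afforded by the isomorphism $\mathcal A$ of \eqref{map.def}: with $P=(p\mid q)$ and $P'=(\Re y\mid\Im y)$ in $\R^{N\times 2}$, the condition $W=0$ reads $PP^T=P'(P')^T$, which forces $P'=PO$ for some $O\in O(2)$ when $P$ has rank $2$; tracking how rotations and reflections in $O(2)$ act on the associated complex vector yields $y=zx$ (rotations) and $y=z\bar x$ (reflections). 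The degenerate cases $\mathrm{rank}\,P\le 1$, i.e.\ $x$ a complex multiple of a real vector, must be handled separately, and there $\bar x$ is already a unimodular multiple of $x$, so the same conclusion holds; this is the ``unique determination of a complex number'' type statement, cf.\ \cite{GSWX18}. This step, with its case analysis over $O(2)$ and the rank-deficient configurations, is the part I expect to require the most care.

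Finally I would connect $W$ to ${\bf M}_{N,2}(\R)$. Using the change of variables ${\bf u}=x+y$, ${\bf v}=x-y$ one checks $(x+y)(x-y)^*=xx^*-yy^*+yx^*-xy^*$, so every rank-$\le 2$ real matrix ${\bf X}=\Re({\bf u}{\bf v}^*)$ decomposes as a symmetric part $\Re(xx^*-yy^*)=W$ plus a skew part $\Re(yx^*-xy^*)$; this is exactly the second description of ${\bf M}_{N,2}(\R)$ in the statement. Since each ${\bf a}_i{\bf a}_i^T$ is symmetric, $\mathcal N_{\bf A}$ contains every skew-symmetric matrix and $\Tr({\bf a}_i{\bf a}_i^T{\bf X})$ depends only on the symmetric part of ${\bf X}$; hence ${\bf X}\in\mathcal N_{\bf A}$ iff $W\in\mathcal N_{\bf A}$, and ${\bf X}$ is skew-symmetric iff $W=0$.

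Combining the three steps, conjugate phase retrieval fails iff there exist $x,y$ with $W\in\mathcal N_{\bf A}$ and $W\ne 0$ (by Step~1 and the crux claim), iff $\mathcal N_{\bf A}\cap{\bf M}_{N,2}(\R)$ contains a matrix that is not skew-symmetric (by the parametrization of Step~3, applied in both directions via ${\bf X}=\Re((x+y)(x-y)^*)$). Taking the contrapositive yields the asserted equivalence, which completes the proof.
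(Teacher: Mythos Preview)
The paper does not give its own proof of this lemma; it is quoted verbatim from \cite{CCS22} and used as a black box. So there is no in-paper argument to compare against.

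That said, your proof is correct and is essentially the natural one. The three steps --- rewriting $|{\bf a}_i^T x|^2=\Tr({\bf a}_i{\bf a}_i^T\Re(xx^*))$, the characterization $\Re(xx^*)=\Re(yy^*)\Leftrightarrow y\in\{zx,z\bar x:z\in\T\}$ via the $O(2)$ action on the $\R^{N\times 2}$ matrix $(\Re x\mid\Im x)$, and the symmetric/skew decomposition $\Re((x+y)(x-y)^*)=\Re(xx^*-yy^*)+\Re(yx^*-xy^*)$ --- line up exactly with how the result is organized in \cite{CCS22} (see also Lemma~4.11 there, which the present paper invokes in the proof of Theorem~\ref{sufficiency.thm}). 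Your observation that $\Tr({\bf a}_i{\bf a}_i^T{\bf X})$ depends only on the symmetric part of ${\bf X}$ is the key point that makes the equivalence go through cleanly in both directions. The rank-deficient case ($x$ a complex multiple of a real vector) is indeed the only place requiring separate care, and your remark that then $\bar x$ is already a unimodular multiple of $x$ disposes of it.
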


As $\Re(z\overline{z_0})=\Re z\Re z_0+\Im z\Im z_0$ for any $z, z_0\in\C$,  we have
\begin{equation*}\label{unique.eq}
\Im z=\frac{\Re(z\overline{z_0})-\Re z\Re z_0}{\Im z_0}
\end{equation*}
 provided that  $\Im z_0\ne 0$. Hence we have the following result on the determination of a complex number.
\begin{proposition}\label{unique.prop}
{\rm  Let $z_0$ be a complex number with $\Im z_0\ne 0$. Then $z$ can be uniquely determined from  $\Re z$ and $\Re (z\overline{z_0})$.}
\end{proposition}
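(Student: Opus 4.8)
The statement is an elementary observation, so my plan is brief: split everything into real and imaginary parts and recover $\Im z$ by a single linear solve. I would start from the identity already recorded just above the statement, namely $\Re(z\overline{z_0})=\Re z\,\Re z_0+\Im z\,\Im z_0$, which comes from multiplying out $z\overline{z_0}=(\Re z+i\,\Im z)(\Re z_0-i\,\Im z_0)$ and taking real parts. Reading this as an equation for the unknown $\Im z$ with the reference number $z_0$ fixed, and using the hypothesis $\Im z_0\neq 0$ to divide, one obtains
\[
\Im z=\frac{\Re(z\overline{z_0})-\Re z\,\Re z_0}{\Im z_0}.
\]
Hence $\Im z$, and therefore $z=\Re z+i\,\Im z$, is an explicit function of the two prescribed real numbers $\Re z$ and $\Re(z\overline{z_0})$ (and of the fixed $z_0$). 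Phrased as a uniqueness statement: if $z$ and $z'$ share the same values of $\Re(\cdot)$ and $\Re(\cdot\,\overline{z_0})$, then the identity forces $\Im(z-z')\,\Im z_0=0$, so $\Im z=\Im z'$ and $z=z'$.

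I do not expect any obstacle here. The only place a hypothesis enters is the division by $\Im z_0$, which is exactly what $\Im z_0\neq 0$ licenses; note the hypothesis is also necessary, since for $z_0\in\mathbb{R}$ the quantity $\Re(z\overline{z_0})=z_0\,\Re z$ carries no information about $\Im z$. I would isolate this as a proposition because it is the small reconstruction device needed afterwards: in the proof of Lemma~\ref{discrete.hermite.lem} one is reduced, after extracting the appropriate real parts and real bilinear quantities from the phaseless Hermite samples, to recovering a complex coefficient from its real part together with a single real inner product against a reference value having nonzero imaginary part, and Proposition~\ref{unique.prop} performs precisely that step.
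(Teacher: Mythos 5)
Your proposal is correct and matches the paper's own argument: the paper likewise expands $\Re(z\overline{z_0})=\Re z\,\Re z_0+\Im z\,\Im z_0$ and solves $\Im z=\bigl(\Re(z\overline{z_0})-\Re z\,\Re z_0\bigr)/\Im z_0$, using only $\Im z_0\ne 0$. Nothing is missing; your extra remark on the necessity of the hypothesis is a harmless addition.
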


\begin{proof}[Proof of Lemma \ref{discrete.hermite.lem}] Observe that for any $ {\bf x}, {\bf y}\in\C^{N}$, $\Re({\bf xx}^*-{\bf yy}^*)$ is symmetric and $\Re({\bf yx}^*-{\bf xy}^*)$ is skew-symmetric.  By Lemma \ref{discrete.lem}, our proof reduces to showing that
\begin{equation}\label{frame.measure.1}
\Tr(\psi_n\psi_n^T\Re({\bf xx}^*-{\bf yy}^*))=0 {\rm \ for \ all\ } 1\le n\le 2N-1
\end{equation}
and
\begin{equation}\label{frame.measure.2}
\Tr(\tilde\psi_n\tilde\psi_n^T\Re({\bf xx}^*-{\bf yy}^*))=0 {\rm \ for \ all\ } 1\le n\le 2N-5,
\end{equation}
if and only if  \begin{equation}\label{frame.eq.3}\Re({\bf xx}^*-{\bf yy}^*)={\bf O},\end{equation}
where $\bf O$ is the zero matrix.

The sufficiency is trivial. Now we  prove the necessity.
 For a vector ${\bf s}=(s_0,\ldots, s_{N-1})\in\C^N$, denote the $j$-th entry of $\bf s$ by ${\bf s}_j, 0\le j\le N-1$, ${\bf s}':=(0,s_1, \ldots, (N-1)s_{N-1})\in \C^N$, $\tilde{\bf s}:=(s_0, s_1,\ldots, s_{N-1}, 0,0, \ldots, 0)\in \C^{2N-1}$
 and \begin{equation*}\label{pf.A.def}
 A_m({\bf s}):=\sum_{j=0}^{m} \tilde{\bf s}_j\overline{\tilde{\bf s}_{m-j}},\ 0\le m\le 2N-2,
 \end{equation*}
 and we have
 \begin{equation}\label{pf.eq.1}
 (N-1)(N-2)A_{2N-3}({\bf s})=A_{2N-3}({\bf s}') {\rm \ and\ } (N-1)^2A_{2N-2}({\bf s})=A_{2N-2}({\bf s}').
 \end{equation}
 Let ${\bf x}=(x_0, x_1,\ldots, x_{N-1}), {\bf y}=(y_0, \ldots, y_{N-1})\in \C^N$ be the vectors satisfying \eqref{frame.measure.1} and \eqref{frame.measure.2}.
 Then  we have
 \begin{equation*}
\psi_n^T\Re({\bf xx}^*)\psi_n=\psi_n^T\Re({\bf yy}^*)\psi_n, 1\le n\le 2N-1,
\end{equation*} and
 \begin{equation*}
\tilde\psi_n^T\Re({\bf xx}^*)\tilde\psi_n=\tilde\psi_n^T\Re({\bf yy}^*)\tilde\psi_n, 1\le n\le 2N-5.
\end{equation*}
Expanding the above equalities, we obtain
\begin{equation*}\label{f.meas.1}
\sum_{m=0}^{2N-2}A_m({\bf x} )\gamma_n^m=\sum_{m=0}^{2N-2}A_m({\bf y})\gamma_n^m, 1\le n\le 2N-1,
\end{equation*}
and
\begin{equation*}\label{f.meas.2}
\sum_{m=2}^{2N-2}A_m({\bf x}' )(\gamma'_n)^m=\sum_{m=2}^{2N-2}A_m({\bf y}')(\gamma'_n)^m, 1\le n\le 2N-5.
\end{equation*}
This together with  the polynomial interpolation and \eqref{pf.eq.1} implies that
\begin{equation}\label{corre.1}
A_m({\bf x})=A_{m}({\bf y}) {\rm \ and\ } A_m({\bf x}')=A_{m}({\bf y}')
\end{equation}
for all  $0\le m\le 2N-2$.

If ${\bf x}={\bf 0}$, then we can easily get that ${\bf y}={\bf 0}$ from \eqref{corre.1},  and hence \eqref{frame.eq.3} holds. Now  we assume that ${\bf x}$ and $\bf y$ are nonzero,  $x_{k_0}$ and $y_{l_0}$ are the first nonzero components of ${\bf x}$ and $\bf y$ respectively.
By $A_{2k}({\bf x})=A_{2k}({\bf y})$ for all $0\le k\le k_0$,  we have
 \begin{equation*}
 k_0=l_0{\rm \ and\  } |x_{k_0}|=|y_{k_0}|.
 \end{equation*}  If $k_0=N-1$, \eqref{frame.eq.3} follows immediately. Now we consider the case that $k_0<N-1$. Without loss of generality, we assume that \begin{equation}\label{pf.assump.eq}
 x_{k_0}=y_{k_0}>0,
 \end{equation} as  $\Re (e^{i\theta}{\bf x}e^{-i\theta}{\bf x}^*)=\Re ({\bf x}{\bf x}^*)$. Then for ${\bf s}\in\{{\bf x}, {\bf y}, {\bf x}', {\bf y}'\}$, we have
 \begin{equation}\label{pf.a.eq.1}
 A_{2k_0+1}({\bf s})=2s_{k_0}\Re s_{k_0+1},
 \end{equation} and  for $2\le k\le 2N-2k_0-2$,
  \begin{equation}\label{pf.a.eq.2}
  A_{2k_0+k}({\bf s})=\left\{\begin{array}{ll}
  2s_{k_0}\Re s_{k_0+k}+2\Re\sum_{j=1}^{(k-1)/2}s_{k_0+j}\overline{s_{k_0+k-j}}& k{\rm \ is\ odd}\\
   2s_{k_0}\Re s_{k_0+k}+2\Re\sum_{j=1}^{k/2-1}s_{k_0+j}\overline{s_{k_0+k-j}}+|s_{k/2}|^2& k{\rm \ is\ even}.
   \end{array} \right.
 \end{equation}
Next we will show that by \eqref{corre.1}, we have
 \begin{equation}\label{pf.eq.3}
 {\bf x}={\bf y} {\rm \ or \ }  {\bf x}=\overline{\bf y}.
 \end{equation}
\vskip 2mm
 {\bf Case 1:} ${\bf x}\in \R^N$.

  In this case, \eqref{pf.eq.3} holds if and only if \begin{equation}\label{pf.eq.4}y_{k_0+l}=x_{k_0+l}\in \R {\rm\ for\ all\ }1\le l\le N-1-k_0,\end{equation}
 if and only if  \begin{equation}\label{pf.eq.5}x_{k_0+l}=\Re y_{k_0+l}{\rm \ \  and\ \  } |x_{k_0+l}|=|y_{k_0+l}|, {\rm \ for\ all\ } 1\le l\le N-1-k_0.\end{equation}  Now we prove \eqref{pf.eq.4} by induction.

 For  $l=1$, by \eqref{pf.assump.eq}-\eqref{pf.a.eq.2}, we have
$x_{k_0+1}=\Re y_{k_0+1}$ from $A_{2k_0+1}({\bf x})=A_{2k_0+1}({\bf y})$,
 and  the linear  system
 \begin{equation}\label{pf.eq.system}\left\{\begin{array}{c}2x_{k_0}(x_{k_0+2}-\Re y_{k_0+2})+(|x_{k_0+1}|^2-|y_{k_0+1}|^2)=0\\
  2k_0(k_0+2)x_{k_0}(x_{k_0+2}-\Re y_{k_0+2})+(k_0+1)^2(|x_{k_0+1}|^2-|y_{k_0+1}|^2)=0\end{array}\right.\end{equation}  from  $A_{2k_0+2}({\bf x})=A_{2k_0+2}({\bf y})$ and $A_{2k_0+2}({\bf x}')=A_{2k_0+2}({\bf y}')$. It implies that   $x_{k_0+2}=\Re y_{k_0+2}$ and $|x_{k_0+1}|=|y_{k_0+1}|$. Then \eqref{pf.eq.5} and hence \eqref{pf.eq.4} holds for $l=1$.

Inductively, we assume that \begin{equation}\label{pf.hypothesis.1}
x_{k_0+k}=y_{k_0+k}\in \R {\rm\ holds\ for\ all\ }k<n. \end{equation} If $n\ge N-k_0$, the inductive proof completes. Otherwise $n< N-k_0$.
By \eqref{pf.a.eq.2}, \eqref{pf.hypothesis.1}
and $A_{2k_0+k}({\bf y})= A_{2k_0+k}({\bf x}),k\le n$, we get \begin{equation}\label{pf.induct.eq.1}\Re y_{k_0+k}=x_{k_0+k}, k\le n.\end{equation}
Then for $n<k\le 2n-1$, we have  $\Re y_{k_0+k}=x_{k_0+k}$  by \eqref{pf.a.eq.2}, \eqref{pf.hypothesis.1}, \eqref{pf.induct.eq.1} and $A_{2k_0+k}({\bf x})=A_{2k_0+k}({\bf y})$.
Therefore, by a similar argument  as in \eqref{pf.eq.system}, we obtain
\begin{equation*}
 |x_{k_0+n}|=|y_{k_0+n}| {\rm\ and\ }x_{k_0+2n}=\Re y_{k_0+2n},
 \end{equation*} from  $A_{2k_0+2n}({\bf x})=A_{2k_0+2n}({\bf y})$ and $A_{2k_0+2n}({\bf x}')=A_{2k_0+2n}({\bf y}')$.
 Together with \eqref{pf.induct.eq.1}, we have \eqref{pf.eq.5}, and hence \eqref{pf.eq.4} holds for $l=n$. Our inductive proof can proceed and hence \eqref{pf.eq.3} holds.

\vskip 2mm
 {\bf Case 2:}  $\Im x_k\ne 0$ for some $k_0<k\le N-1$.

   Let $k_1$ be the first entry such that $\Im c_{k_1}\ne 0$. Following a similar argument as in Case 1, we have
\begin{equation}\label{corre.2}
x_k=y_k \in \R {\rm\  for\ all\ } k_0\le k<k_1,  |x_{k_1}|=|y_{k_1}| {\rm\ and\ }
\Re x_{k}=\Re y_{k}{\rm\  for\ all\ }  k_1\le k\le 2k_1-k_0,
\end{equation}
which implies that $x_{k_1}=y_{k_1}$ or  $x_{k_1}=\overline{y_{k_1}}$. Then  for ${\bf s}\in\{{\bf x}, {\bf y}, {\bf x}', {\bf y}'\}$, we have

  \begin{equation}\label{pf.a.eq.3}
  A_{2k_1+k}({\bf s})=\left\{\begin{array}{ll}
  \tilde{A}({\bf s},k)+2\Re\sum_{j=1}^{(k-1)/2}s_{k_1+j}\overline{s_{k_1+k-j}}& k{\rm \ is\ odd}\\
   \tilde{A}({\bf s},k)+2\Re\sum_{j=1}^{k/2-1}s_{k_1+j}\overline{s_{k_1+k-j}}+|s_{k_1+k/2}|^2& k{\rm \ is\ even},
   \end{array} \right.
 \end{equation} where \begin{equation*}\tilde{A}({\bf s},k)=2s_{k_0}\Re s_{2k_1-k_0+k}+\ldots+2s_{k_1-1}\Re s_{k_1+k+1}+2\Re s_{k_1}\overline{s_{k_1+k}}\end{equation*}
 for all $2\le k\le 2N-2k_1-2$.

For the case that $x_{k_1}=y_{k_1}$, \eqref{pf.eq.3} holds if and only if \begin{equation}\label{pf.eq.6}y_{k_1+l}=x_{k_1+l} {\rm\ for\ all\ }1\le l\le N-1-k_1.\end{equation}
 For  $l=1$, by \eqref{corre.2}, \eqref{pf.a.eq.3},  $A_{2k_1+1}({\bf x})=A_{2k_1+1}({\bf y})$ and $A_{2k_1+1}({\bf x}')=A_{2k_1+1}({\bf y}')$, we have
 \begin{equation}\label{pf.eq.system.3}
 \left\{\begin{array}{c}x_{k_0}\Re( x_{2k_1-k_0+1}-y_{2k_1-k_0+1})=\Re( y_{k_1}\overline{x_{k_1+1}}-x_{k_1}\overline{x_{k_1+1}})\\
  k_0(2k_1-k_0+1)x_{k_0}\Re(x_{2k_1-k_0+1}- y_{2k_1-k_0+1})=k_1(k_1+1) \Re( y_{k_1}\overline{x_{k_1+1}}-x_{k_1}\overline{x_{k_1+1}}),\end{array}\right.\end{equation}
which  implies that \begin{equation*}
\Re (x_{k_1}\overline{x_{k_1+1}})=\Re  (y_{k_1}\overline{y_{k_1+1}}){\rm\ and\ }\Re x_{2k_1-k_0+1}=\Re y_{2k_1-k_0+1}.
 \end{equation*}This together with \eqref{corre.2} and Proposition \ref{unique.prop} implies that  \eqref{pf.eq.6} holds.
Inductively, we assume that \begin{equation}\label{pf.hypothesis.2}
x_{k_1+l}=y_{k_1+l} {\rm\ holds\ for\ all\ }l<n. \end{equation}  If $n\ge N-k_1$, the inductive proof completes. Otherwise $n< N-k_1$. Then  from $l=2$ to $n$, by a similar argument as in \eqref{pf.eq.system.3}, we have \begin{equation}\label{pf.eq.7}
\Re x_{2k_1-k_0+l}=\Re y_{2k_1-k_0+l}{\rm \  and\ }\Re( x_{k_1}\overline{x_{k_1+n}})=\Re (y_{k_1}\overline{y_{k_1+n}}),\end{equation}
 from \eqref{corre.2}, \eqref{pf.a.eq.3}, \eqref{pf.hypothesis.2}, $A_{2k_1+l}({\bf y})= A_{2k_1+l}({\bf x}){\rm\ and\ }A_{2k_1+l}({\bf y}')= A_{2k_1+l}({\bf x}')$ sequentially.
 This proves that  \eqref{pf.eq.6} holds for $l=n$ by Proposition \ref{unique.prop}. Our inductive proof can proceed.

Using a similar argument,   we have
\begin{equation*}y_{k_1+l}=\overline{x_{k_1+l}}, {\rm\ for\ all\ }1\le l\le N-1-k_1,\end{equation*}
 if  $x_{k_1}=\overline{y_{k_1}}$.  This together with \eqref{pf.eq.6} proves \eqref{pf.eq.3}.
\end{proof}

\subsection{Reconstruction algorithms and numerical experiments}
Let  $\psi$ be a real-valued compactly supported differentiable piecewise polynomial of order $N-1$ and have  support length $N$ satisfying \eqref{supportlength.def2}, \eqref{L.assump}, \eqref{f.real.cond} and \eqref{global.assump}. Denote $\mathcal P_{N-1}$ be the space of polynomials of order not larger than $N-1$. Assume that $\psi$ has local linear independence on $(0,1)$ and there exists an one-to-one correspondence  from $V(\psi)|_{I_j}$ to $\mathcal P_{N-1}$ for any $I_j=(j,j+1), j\in\Z$. Based on the constructive proof of Lemma \ref{discrete.hermite.lem}, we propose an algorithm for the  conjugate phase retrieval in $V(\psi)$ in this section.
 We remark that the algorithm can be applied for the conjugate phase retrieval in $\C^N$ from the real frame vectors as in \eqref{real.frame.eq}.
~\\
\begin{breakablealgorithm}\label{alg1}
	\caption{ Conjugate phase retrieval
of piecewise polynomials in  $V(\psi)$}
	\label{alg.cpr}
	\begin{algorithmic}
\REQUIRE order $N$, sampling sets $\Gamma=\{\gamma_k\}_{k=0}^{ 2N-2}\subset (0,1)$ and $\Gamma'=\{\gamma'_k\}_{k=0}^{2N-6}\subset (0,1)$,  phaseless measurements $|f(\gamma+j)|,\gamma\in \Gamma$, and $|f'(\gamma'+j)|, \gamma'\in\Gamma'$, $j\in\Z$.

\hskip -0.1in{\bf Initials:} {Compute the Vandermonde matrices ${\bf V}_{\Gamma}=(\gamma_k^m)_{0\le k,m\le 2N-2}$  and ${\bf V}_{\Gamma'}=({\gamma'_k}^m)_{0\le k,m\le 2N-6}$.  For any $j\in\Z$, set the vectors ${\bf d}_j=(d_{j,0}, d_{j,1},...,d_{j, N-1})\in\C^N$, ${\bf d}_j'=(0, d_{j,1},...,(N-1)d_{j, N-1})\in\C^N$ and $\pmb\psi_j(x)=(\psi(x-j+N-1),\ldots, \psi(x-j))^T$, and compute the invertible matrix ${\bf H}_j$ such that $\pmb\psi_j={\bf H}_j(1,x\ldots, x^{N-1})^T$.}

\hskip -0.1in{\bf Steps:}
\STATE 1). {\bf Local conjugate phase retrieval on $I_j=(j,j+1)$.}

\STATE(1a).
{Determine $A_m({\bf d}_j), 0\le m\le 2N-2$, $A_m({\bf d}'_j), 2\le m\le 2N-2$ by the inverse of Vandermonde matrices ${\bf V}_\Gamma$, ${\bf V}_{\Gamma'}$ and the phaseless samples $|f(\gamma+j)|,\gamma\in \Gamma$, and  $|f'(\gamma'+j)|, \gamma'\in\Gamma'$.
}
\STATE(1b). {Find the first entry $k_0$ such that $A_{2k_0}({\bf d}_j)\neq0$
and set $$d_{j,k_0}=\sqrt{A_{2k_0}({\bf d}_j)}\in \R^+.$$
For $1\le l\le k_0+N-1$, determine $\Re d_{j,k_0+l}$ and $|d_{j,k_0+l}|$ from $A_{2k_0+l'}({\bf d}_j)$ and $A_{2k_0+l'}({\bf d}_j')$, $1\le l' \le 2l$.
}
\STATE(1c). {Find the first entry $k_1$ such that $|\Re d_{j,k_1}|\neq|d_{j,k_1}|$, and set  $$\ d_{j,k_1}=\Re d_{j,k_1}+ i\sqrt{|d_{j,k_1}|^2-\Re d_{j,k_1}^2}.$$
For $1\le l\le N-k_1-1$, determine $\Re d_{j,k_1}\overline{d_{j,k_1+l}}$ and $\Re d_{j, k_1+l}$ from $A_{2k_0+l'}({\bf d}_j')$ and $A_{2k_0+l'}({\bf d}_j)$, $l'\le 2k_1-2k_0+l$, and set
$$d_{j,k_1+l}=\Re d_{j,k_1+l}+i\frac{\Re d_{j,k_1}\overline{d_{j,k_1+l}}-\Re d_{j,k_1}\Re d_{j,k_1+l}}{\Im d_{j,k_1}}.$$
}

\STATE (1d). {Set $\tilde{\bf c}_j={\bf H}_j^{-1}{\bf d}_j$ and write $f_{\epsilon}|_{I_j}(x)=\tilde{\bf c}_j^T\pmb\psi_j(x)$.}
\STATE 2).  {\bf Sewing the coefficients among neighbouring intervals}
Let $\tilde{\bf c}_{j_0}$ be a nonzero vector with $\Im (\tilde c_{j_0,k_1}\overline {\tilde c_{j_0,k_2}})\ne 0$ for some $1\le k_1\le k_2\le N-2$. Set $c_{\epsilon}(j_0-N+k+1)=\tilde c_{j_0,k}$ for all $0\le k\le N-1$.
\STATE (2a).
{For any $j\ge j_0$, find the entries  $1\leq k_1, k_2\leq N-1$ satisfying $\Im \tilde c_{j,k_1}\overline{\tilde c_{j,k_2}}\ne0$.
Adjust phase and conjugation of $\tilde{\bf c}_{j+1}$ appropriately to be  ${\bf c}_{j+1}$ so that the vector  $(c_{j+1,k_1-1},c_{j+1,k_2-1})$ satisfy
\begin{equation*}
(c_{j+1,k_1-1},c_{j+1,k_2-1}) =(\tilde c_{j,k_1},\tilde c_{j,k_2})
\end{equation*}
and set $c_\epsilon(j+1)=c_{j+1,N-1}.$
}
\STATE (2b).
{For any $j\le j_0$, find the entries  $0\leq k_1, k_2\leq N-2$ satisfying $\Im\tilde c_{j,k_1}\overline{\tilde c_{j,k_2}}\ne0$.
Adjust phase and conjugation of $\tilde{\bf c}_{j-1}$ appropriately to be ${\bf c}_{j-1}$ so that the vector  $(c_{j-1,k_1+1},c_{j-1,k_2+1})$ satisfy
\begin{equation*}
(c_{j-1,k_1+1},c_{j-1,k_2+1}) =(\tilde c_{j,k_1},\tilde c_{j,k_2})
\end{equation*}
and set $c_\epsilon(j-N)=c_{j-1,0}.$
}
\ENSURE{$f_\epsilon(x)=\sum_{k\in\Z}c_{\epsilon}(k)\psi(x-k)$.
}
	\end{algorithmic}
\end{breakablealgorithm}
~\\

We remark that the above algorithm is not numerical stable as the inverse of a Vandermonde matrix is not numerical stable\cite{Gautschi20}.
Next, we demonstrate the performance of the above algorithm on reconstructing a differentiable function
\begin{equation}\label{f}
f(t)=\sum_{k=K_1}^{K_2}c_k\psi(t-2k)
\end{equation}
with finite duration, where
\begin{equation*}
\psi(t)=
\begin{cases}
\frac{1}{2}t^2 &{\rm\  if\ }0\le t<2\\
-t^2+3t &{\rm\  if\ }2\le t<4\\
\frac{1}{2}t^2-3t &{\rm\  if\ }4\le t<6\\
0 &{\rm\ otherwise}.
\end{cases}
\end{equation*}

Our noisy phaseless samples are taken on $(0,2)+2\Z$,
\begin{equation}\label{samples1}
 z_\epsilon(\gamma)=|f(\gamma)|^2+\|f\|_\infty^2\epsilon(\gamma)\geq0, \gamma\in \Gamma+2\Z\subset (0,2)+2\Z
\end{equation}
and
\begin{equation}\label{samples2}
 z'_\epsilon(\gamma)=|f'(\gamma)|^2+\|f\|_\infty^2\epsilon(\gamma)\geq0, \gamma\in \Gamma'+2\Z\subset (0,2)+2\Z,
\end{equation}
where the sets $\Gamma,\Gamma'$ contain $7$ and $3$ distinct points in $(0,2)$ respectively and  $\epsilon(\gamma)\in[-\varepsilon,\varepsilon]$ are randomly selected with noise level $\varepsilon >0$.
 Denote the function reconstructed by  Algorithm \ref{alg1} from the noisy phaseless samples \eqref{samples1} and \eqref{samples2} by
\begin{equation}\label{rf}
f_\epsilon(t)=\sum_{k=K_1}^{K_2}c_{\epsilon }(k)\psi(t-2k).
\end{equation}
Shown in Figure \ref{fig.} is the performance of reconstructing a function from its noisy phaseless samples in  \eqref{samples1}-\eqref{samples2} by Algorithm \ref{alg1}.

\begin{figure}[h]

\includegraphics[width=6cm]{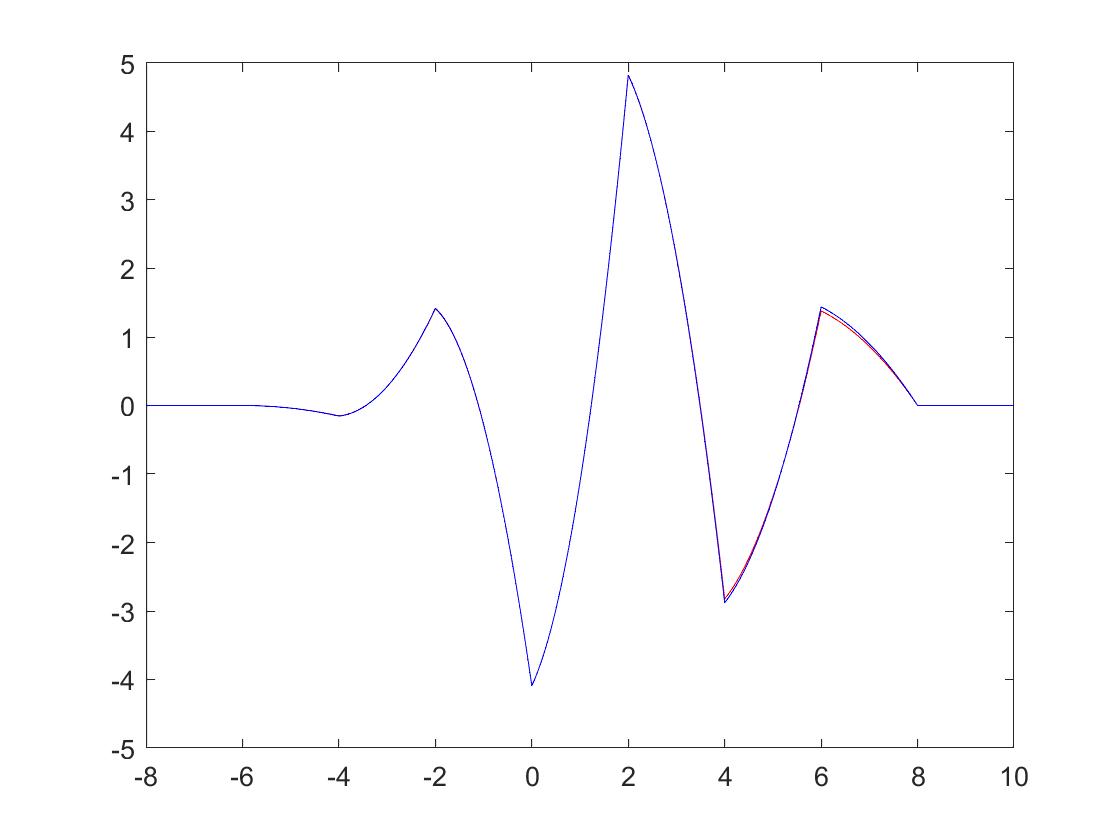} \label{1}
\includegraphics[width=6cm]{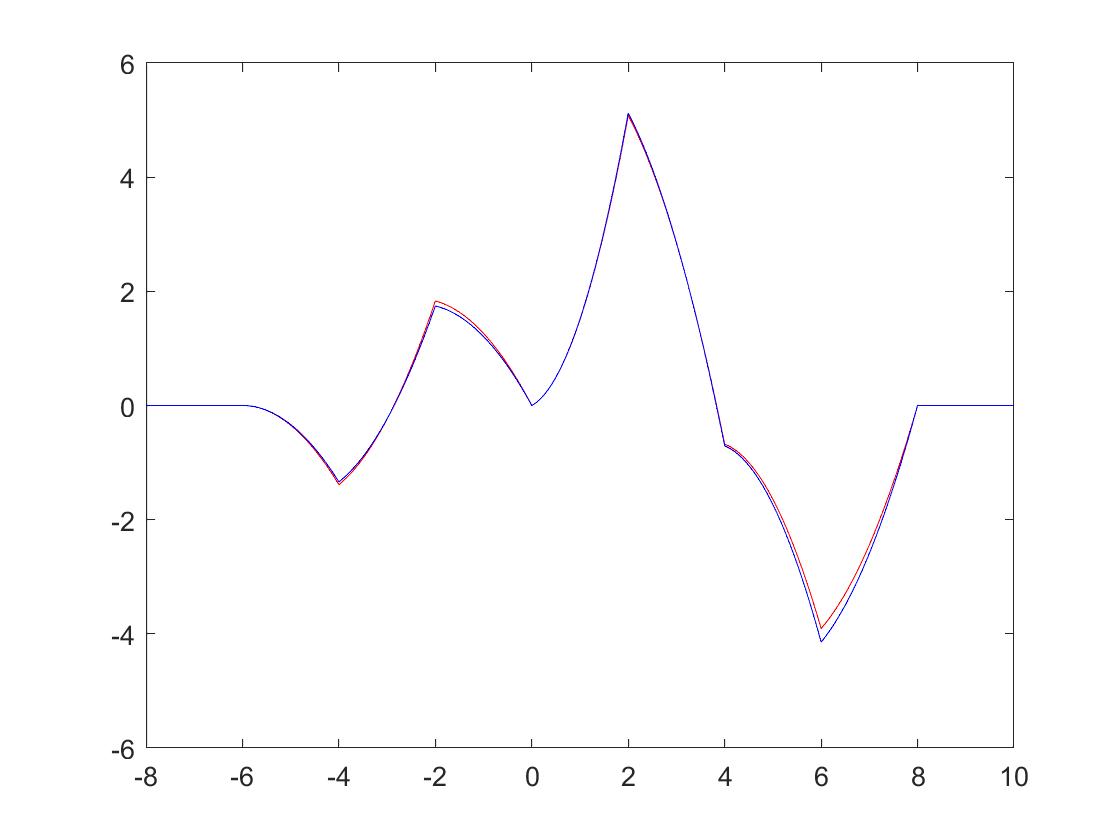} \label{2}

\caption{Plotted on the left is the real parts $\Re f$ (in blue) of the original signal $f$ and $\Re f_\epsilon$ (in red) of the constructed signal $f_\epsilon$, on the right is the imaginary parts  $\Im f$ (in blue) of the original signal $f$ and $\Im f_\epsilon$ (in red) of the constructed signal $f_\epsilon$ via Algorithm \ref{alg1}, and $\max_{k}|\Re (c_k-c_\epsilon(k))/\Re c_k|=0.0429$ and $\max_{k}|\Im (c_k-c_\epsilon(k))/\Im c_k|=0.0611$, where $\varepsilon=10^{-5}$, $K_1=-3$, $K_2=1$ and $\Re c_k,\Im c_k\in[-1,1], -3\le k\le 1$ are randomly selected.   }
\label{fig.}
\end{figure}

\bibliographystyle{plain}

\begin{thebibliography}{22}
\bibitem{ADGT17}
R. Alaifari, I. Daubechies, P. Grohs, and G. Thankur, Reconstructing real-valued functions from unsigned coefficients with respect to wavelet and other frames. \textit{J. Fourier Anal. Appl.}, {\bf 23}(2017), 1480-1494.

\bibitem{ADGY16}
R. Alaifari, I. Daubechies, P. Grohs, and R. Yin,  Stable phase retrieval in infinite dimensions. {\em Found. Comput. Math.}, {\bf 19}(2019), 869-900.


\bibitem{RGrohs17}
R. Alaifari and P. Grohs, Phase retrieval in the general setting of continuous frames for Banach spaces. {\em SIAM J. Math. Anal.}, {\bf 49}(2017), 1895-1911.

\bibitem{AG01}
A. Aldroubi and K. Gr\"{o}chenig, Non-uniform sampling in shift-invariant space. \textit{SIAM Rev.}, {\bf43}(2001), 585-620.
\bibitem{AST05}
A. Aldroubi, Q. Sun and W.-S. Tang, Convolution, average sampling, and a calderon resolution of the identity for shift-invariant spaces. \textit{J. Fourier Anal. Appl.}, {\bf11}(2005), 215-244.

\bibitem{ABFM14}B. Alexeev, A. S. Bandeira, M. Fickus, and  D. G. Mixon, Phase retrieval with polarization. {\em SIAM J. Imaging Sci.}, {\bf 7}(2014),  35-66.

\bibitem{radu2016} R. Balan, Reconstruction of signals from magnitudes of redundant representation: The complex case.
{\em Found. Comput. Math.}, {\bf 16}(2016),  677-721.

\bibitem{BBCE09} R. Balan, B. G. Bodmann, P. G. Casazza, and D. Edidin, Painless reconstruction from magnitudes of frame coefficients. {\em J. Fourier Anal. Appl.}, {\bf 15}(2009), 488-501.
\bibitem{BCD06}
R. Balan, P. G. Casazza and D. Edidin, On signal reconstruction without phase. \textit{Appl. Comput. Harmon. Anal.}, {\bf20}(2006), 345-356.
\bibitem{Bandeira14}  A. S. Bandeira, J.  Cahill,  D. G.  Mixon, and  A. A. Nelson, Saving phase: Injectivity and stability for phase retrieval. {\em  Appl. Comput. Harmon. Anal.}, {\bf  37}(2014),  106-125.

\bibitem{edidin22}
T. Bendory and D. Edidin, Algebraic theory of phase retrieval. {\em Noti. Am. Math. Soc.}, {\bf69}(2022), 1487-1495.

\bibitem{CCD16}
J. Cahill, P. G. Casazza and I. Daubechies, Phase retrieval in infinite-dimensional Hilbert space. \textit{Trans. Am. Math. Soc., Ser.B}, {\bf3}(2016), 63-76.
\bibitem{CCPW16}
J. Cahill, P. G. Casazza, J. Peterson, and L. Woodland, Phase retrieval by projections.
\textit{Houston J. Math.}, {\bf42}(2016), 537-558.

\bibitem{candes2015} E. J. Cand$\grave{{\rm e}}$s, Y. C. Eldar, T. Strohmer, and V. Voroninski, Phase retrieval via matrix
completion. {\em  SIAM Rev.}, {\bf 52}(2015), 225-251.

\bibitem{CSV12}
E. J.  Cand$\grave{{\rm e}}$s, T. Strohmer and V. Voroninski, PhaseLift:
Exact and stable  signal recovery from magnitude measurements
via convex programming.  {\em Commun. Pure  Appl. Math.}, {\bf 66}(2013), 1241-1274.

\bibitem{CS22}
T. Chen and W. Sun, Linear phaseless retrieval of functions in spline
spaces with arbitrary knots. \textit{IEEE Trans. Inf. Theory}, {\bf68}(2022), 1385-1396.

\bibitem{CCSW20}
Y. Chen, C. Cheng, Q. Sun, and H. Wang, Phase retrieval of real-valued signals in a shift-invariant space. \textit{Appl. Comput. Harmon. Anal.}, {\bf49}(2020), 56-73.
\bibitem{CCS22}
Y. Chen, C. Cheng and Q. Sun, Phase retrieval of complex and vector-valued functions. \textit{J. Funct. Anal.}, {\bf 283}(2022), 109593.
\bibitem{cheng2020}
C. Cheng, I. Daubechies, N. Dym, and J. Lu,
Stable phase retrieval from locally stable and conditionally connected measurements.
{\em Appl. Comput. Harmon. Anal.},
{\bf 55}(2021),  440-465.
\bibitem{CJS19}
C. Cheng, J. Jiang and Q. Sun, Phaseless sampling and reconstruction of real-valued signals in shift-invariant spaces. \textit{J. Fourier Anal. Appl.}, {\bf25}(2019), 1361-1394.
\bibitem{CS21}
C. Cheng and  Q. Sun, Stable phaseless sampling and reconstruction of real-valued signals with finite rate of innovation. \textit{Acta Appl. Math.}, {\bf171}(2021), 3.
\bibitem{deBoor92} C. de Boor, R. A. Devore and A. Ron, The structure of finitely generated shift-invariant spaces in $L^2(\R^d)$. \textit{J. Funct. Anal.}. {\bf119}(1992), 37-38.


\bibitem{DB22}
D. Domel-White and B. G. Bodmann, Phase retrieval by binary questions: Which complementary subspace is closer?. \textit{Constr. Approx.} {\bf56}(2022), 1-33.


\bibitem{Edidin17}
D. Edidin, Projections and phase retrieval. \textit{Appl. Comput. Harmon. Anal.}, {\bf42}(2017), 350-359.

\bibitem{Lai20}
L. Evans and  C.-K. Lai, Conjugate phase retrieval on $\mathbb{C}^M$ by real vectors. \textit{Linear Algebra Appl.}, {\bf587}(2020), 45-69.

\bibitem{FAUU20}
J. Fageot, S. Aziznejad, M. Unser, and V. Uhlmann, Support and approximation properties of Hermite splines. \textit{J. Comput. Appl. Math.}, {\bf368}(2020), 112503.
\bibitem{F78}
J. R. Fienup, Reconstruction of an object from the modulus of its fourier transform. \textit{Optim. Lett.} {\bf3}(1978), 27-29.
\bibitem{F82}
J. R. Fienup, Phase retrieval algorithms: A comparison. \textit{Appl. Optim.} {\bf21}(1982), 2758-2769.
\bibitem{GSWX18}
B. Gao, Q. Sun, Y. Wang, and Z. Xu, Phase retrieval from the magnitudes of affine linear measurements. \textit{Adv. Appl. Math.}, {\bf93}(2018), 121-141.
\bibitem{Gautschi20}
W. Gautschi, How (un)stable are Vandermonde systems?. \textit{Asymptot. Comput. Anal.}, 2020.
\bibitem{G20}
K. Gr\"{o}chenig, Phase-retrieval in shift-invariant spaces with gaussian generator. \textit{J. Fourier Anal. Appl.} {\bf26}(2020), 52.
\bibitem{GRS18}
K. Gr\"{o}chenig, J. L. Romero and J. St\"{o}ckler, Sampling theorems for shift-invariant spaces, Gabor frames, and totally positive functions. \textit{Invent. math.}, {\bf211}(2018), 1119-1148.



\bibitem{HLO80}
M. H. Hayes, J. S. Lim and A. V. Oppenheim, Signal reconstruction from phase or magnitude. \textit{IEEE Trans. Acoust. Speech Signal Process.}, {\bf28}(1980), 672-680.
\bibitem{HES16}
K. Huang, Y. C. Eldar and N. D. Sidiropoulos, Phase retrieval from 1D fourier measurements: Convexity, uniqueness, and algorithms. \textit{IEEE Trans. Signal Process.}, {\bf64}(2016), 6105-6117.




\bibitem{IVW16}
M. A. Iwen, A. Viswanathan and Y. Wang, Fast phase retrieval from local correlation measurements. \textit{SIAM J. Imaging Sci.}, {\bf9}(2016), 1655-1688.


\bibitem{JF56}
D. L. Jagerman and L. J. Fogel, Some general aspects of the sampling theorem.  \textit{IRE Trans. Inf. Theory}, {\bf 2}(1956), 139-146.
\bibitem{jackson2005} B. Jackson and T. Jord$\acute{\rm a}$n, Connected rigidity matroids and unique realization of graphs, {\em J. Comb. Theory Ser. B}, {\bf 94}(2005), 1-29.

\bibitem{JR22}
P. Jaming and M. Rathmair, Uniqueness of phase retrieval from three measurements. arXiv: 2205.08753.
\bibitem{KS92}
M. V. Klibanov and P. E. Sacks, Phaseless inverse scattering and the phase problem in optics. \textit{J. Math. Phys.}, {\bf33}(1992), 3813-3821.


\bibitem{Lai21}
C.-K. Lai, F. Littmann and E. S. Weber, Conjugate phase retrieval in Paley-Wiener space. \textit{J. Fourier Anal. Appl.}, {\bf27}(2021), 89.
\bibitem{ls20}
Y. Li and W. Sun, Random phaseless sampling for causal signals in shift-invariant spaces: A zero distribution perspective. \textit{IEEE Trans. Signal Process.}, {\bf68}(2020), 5473-5486.

\bibitem{mallatbook} S. Mallat, {\em  A wavelet tour of signal processing: The sparse way}.
Academic Press, 2009.

\bibitem{MW15}
S. Mallat and I. Waldspurger, Phase retrieval for the Cauchy wavelet transform. \textit{J. Fourier Anal. Appl.} {\bf21}(2015), 1251-1309.

\bibitem{McDonald04}
J. N. McDonald, Phase retrieval and magnitude retrieval of entire functions. \textit{J. Fourier Anal. Appl.}, {\bf10}(2004), 259-267.


\bibitem{M90}
R. P. Millane, Phase retrieval in crystallography and optics. \textit{J. Opt. Soc. Am. A.}, {\bf7}(1990), 394-411.
\bibitem{M89}
M. S. Mummy, Hermite interpolation with $B$-splines. \textit{Comput. Aided Geom. Des.}, {\bf6}(1989), 177-179.

\bibitem{PYB14}
V. Pohl, F. Yang and H. Boche, Phaseless signal recovery in infinite dimensional spaces using structured modulations. \textit{J. Fourier Anal. Appl.}, {\bf20}(2014), 1212-1233.
\bibitem{QBP16}
T. Qiu, P. Babu and D. P. Palomar, PRIME: Phase retrieval via majorization-minimization. \textit{IEEE Trans. Signal Process.}, {\bf64}(2016), 5957-5969.
\bibitem{R21}
J. L. Romero, Sign retrieval in shift-invariant spaces with totally positive generator. \textit{J. Fourier Anal. Appl.}, {\bf27}(2021), 27.
\bibitem{SMS16}
B. A. Shenoy,  S. Mulleti and C. S. Seelamantula, Exact phase retrieval in principal shift-invariant spaces.
\textit{IEEE Trans. Signal Proc.}, {\bf64}(2016), 406-416.
\bibitem{Sun10}
Q. Sun, Local reconstruction for sampling in shift-invariant spaces. \textit{Adv. Comput. Math.}, {\bf32}(2010), 335-352.
\bibitem{sun21}
W. Sun, Local and global phaseless sampling in real spline spaces. \textit{Math. Comput.}, {\bf90}(2021), 1899-1929.

\bibitem{TBHTP19}
A. Tahir, J. B$\ddot{\rm o}$ling, M.-H. Haghbayan, H. T. Toivonen, and J. Plosila, Swarms of unmanned aerial vehicles-a survey. \textit{J. Ind. Inf. Integr.}, {\bf16}(2019), 100-106.
\bibitem{T11}
G. Thakur, Reconstruction of bandlimited functions from unsigned samples. \textit{J. Fourier Anal. Appl.}, {\bf17}(2011), 720-732.

\bibitem{WX14}
Y. Wang and Z. Xu, Phase retrieval for sparse signals. \textit{Appl. Comput. Harmon. Anal.}, {\bf37}(2014), 531-544.
\bibitem{WX19}
Y. Wang and Z. Xu, Generalized phase retrieval: Measurement number, matrix recovery and beyond. \textit{Appl. Comput. Harmon. Anal.}, {\bf45}(2019), 423-446.
\bibitem{xu}
Z. Xu,  Phase retrieval: Theory, model and algorithms. {\em Mathematica Numerica Sinica}, {\bf44}(2022), 1-18.

\end{thebibliography}

\end{document}